\documentclass[twoside,11pt,reqno,letter]{amsart}

\usepackage{amsmath,amssymb,amscd,mathrsfs,epic,latexsym,tikz,mathrsfs,cite,hyperref}
\usepackage{wasysym,enumerate}
\usetikzlibrary{decorations.markings}

%\usepackage{showkeys}

%%fakesection 
\raggedbottom
\synctex=1
\numberwithin{equation}{section}
%\swapnumbers

\newtheorem{Proposition}[equation]{Proposition}
\newtheorem{Lemma}[equation]{Lemma}
\newtheorem{Theorem}[equation]{Theorem}
\newtheorem{Corollary}[equation]{Corollary}
\theoremstyle{definition}  %% makes all of the theorem environments which follow appear in \rm
\newtheorem{Definition}[equation]{Definition}
\newtheorem{Remark}[equation]{Remark}

\let\<\langle
\let\>\rangle

% Comments for coauthors...
\newcommand\Comment[2][\relax]{\space\par\medskip\noindent%
   \fbox{\begin{minipage}{\textwidth}\textbf{Comment\ifx\relax#1\else---#1\fi}\newline%
        #2\end{minipage}}\medskip
}

%dominance orders
\DeclareMathOperator\ldom {\triangleleft}
\DeclareMathOperator\ledom{\trianglelefteq}
\DeclareMathOperator\gdom {\triangleright}
\DeclareMathOperator\gedom{\trianglerighteq}

\def\bi{\text{\boldmath$i$}}
\def\bj{\text{\boldmath$j$}}

\def\ba{\text{\boldmath$a$}}

\def\b1{\text{\boldmath$1$}}
\def\pmod#1{\text{ }(\text{\rm mod } #1)\,}

\newcommand{\words}{\langle I \rangle}
\newcommand{\Hom}{\operatorname{Hom}}

\def\qdim{{\operatorname{dim}_q}\,}\newcommand{\Gc}{\operatorname{Gc}}
\newcommand{\Arm}{\operatorname{Arm}}
\newcommand{\Leg}{\operatorname{Leg}}
\newcommand{\Shuf}{\operatorname{Shuf}}
\newcommand{\Sh}{\operatorname{Sh}}

\def\sgn{\mathtt{sgn}}
\newcommand{\res}{\operatorname{res}}

\newcommand{\St}{\operatorname{St}}

\newcommand{\cont}{\operatorname{cont}}

\newcommand{\Stab}{\operatorname{Stab}}

\newcommand{\F}{\mathbb{F}}
\newcommand{\Z}{\mathbb{Z}}
\newcommand{\N}{\mathbb{N}}

\def\phi{{\varphi}}

\newcommand{\ga}{\gamma}

\newcommand{\la}{\lambda}
\newcommand{\La}{\Lambda}
\newcommand{\al}{\alpha}

\def\Si{\mathfrak{S}}
\newcommand{\si}{\sigma}

\newcommand{\de}{\delta}

%\newcommand{\T}{\mathcal{T}}
%\newcommand{\U}{\mathcal{U}}

% for trivial and sign representations of the symmetric group

\def\Belt{\mathbf{B}}

\newcommand{\Ann}{{\mathrm {Ann}}}
\newcommand{\C}{{\mathbb C}}

\newcommand{\D}{{\mathscr D}}

\newcommand\Par{\mathscr P}

\def\T{{\mathtt T}}
\def\U{{\mathtt U}}
\def\Stab{{\mathtt S}}

\def\nslash{\:\notslash\:}

\def\height{{\operatorname{ht}}}

\def\wt{{\operatorname{wt}}}

\renewcommand\O{\mathcal O}

\newcommand\SetBox[2][35mm]{\Big\{\vcenter{\hsize#1\centering#2}\Big\}}

% Only the first | is treated specially.
{\catcode`\|=\active
  \gdef\set#1{\mathinner{\lbrace\,{\mathcode`\|"8000%
  \let|\midvert #1}\,\rbrace}}
}
\def\midvert{\egroup\mid\bgroup}

%% tikz macros for braids, diagrams and tableaux %%%%%%%%%%%%%%%%
\colorlet{darkgreen}{green!50!black}
\tikzset{dots/.style={very thick,loosely dotted},
         greendot/.style={fill,circle,color=darkgreen,inner sep=1.5pt,outer sep=0}
}
\def\greendot(#1,#2){\node[greendot] at(#1,#2){}}

\newenvironment{braid}{% sets defaults for the braid diagrams
  \begin{tikzpicture}[baseline=6mm,blue,line width=1pt, scale=0.4,
                      draw/.append style={rounded corners},
                      every node/.append style={font=\tiny}]%
  }{\end{tikzpicture}
}

\def\Grid(#1,#2){%  draws a coordinate grid inside a braid diagram
  \draw[very thin,gray,step=2mm] (0,0)grid(#1,#2);
  \draw[very thin,darkgreen,step=10mm] (0,0)grid(#1,#2);
}

% tikz macro for drawing tableaux with optionally shaded nodes.
% \Tableau{{1,2,3,4},{5,6,7},{8,9}}   -> 1234/567/89
% \Tableau[(1,-1),(2,-1),(2,0),(3,0),(4,0)]{{1,2,3,4},{5,6,7},{8,9}}  
%    -> 1234/567/89 with the (1,2)-Garnir belt shaded
\newcommand\Tableau[2][\relax]{
  \begin{tikzpicture}[scale=0.5,draw/.append style={thick,black}]
    \ifx\relax#1\relax%
    \else % shade the boxes in #1
      \foreach\box in {#1} { \filldraw[blue!30]\box+(-.5,-.5)rectangle++(.5,.5); }
    \fi
    \newcount\row\newcount\col
    \row=0
    \foreach \Row in {#2} {
       \col=1
       \foreach\k in \Row {
          \draw(\the\col,\the\row)+(-.5,-.5)rectangle++(.5,.5);
          \draw(\the\col,\the\row)node{\k};
          \global\advance\col by 1
       }
       \global\advance\row by -1
    }
  \end{tikzpicture}
}

%% tikz macro for drawing diagrams with optionally shaded nodes.
%% \YoungDiagram{4,3,2}   -> Young diagram for (4,3,2)
%% \YoungDiagram[(1,-1),(2,-1),(2,0),(3,0),(4,0)]{4,3,2}  
%%    -> Young diagram for (4,3,2) with the (1,2)-Garnir belt shaded
\newcommand\YoungDiagram[2][\relax]{
  \begin{tikzpicture}[scale=0.5,draw/.append style={thick,black}]
    \ifx\relax#1\relax%
    \else % shade the boxes in #1
    \foreach\box in {#1} {
      \filldraw[blue!30]\box rectangle ++(1,1);
    }
    \fi
    \newcount\row
    \row=0
    \foreach \col in {#2} {
       \draw(1,\the\row)grid ++(\col,1);
       \global\advance\row by -1
    }
  \end{tikzpicture}
}

\newcommand\LeftBrac[2][\relax]{
  \begin{tikzpicture}[scale=0.5,draw/.append style={thick,black}]
    \ifx\relax#1\relax%
    \else % shade the boxes in #1
    \foreach\box in {#1} {
      \filldraw[blue!30]\box rectangle ++(1,1);
    }
    \fi
    \newcount\row
    \row=0
    \foreach \col in {#2} {
       \draw(1,\the\row)grid ++(\col,1);
       \global\advance\row by -1
    }
  \end{tikzpicture}
}

\begin{document}

\title[Specht module homomorphisms]{Homomorphisms from an arbitrary Specht module to one corresponding to a hook}

\author{\sc Joseph W. Loubert}
\address{Department of Mathematics\\ University of Oregon\\
Eugene\\ OR~97403, USA}
\email{loubert@uoregon.edu}
\begin{abstract}
We use the presentation of the Specht modules given by Kleshchev-Mathas-Ram to derive results about Specht modules. In particular, we determine all homomorphisms from an arbitrary Specht module to a fixed Specht module corresponding to any hook partition. Along the way, we give a complete description of the action of the standard KLR generators on the hook Specht module. This work generalizes a result of James.
\end{abstract}

\maketitle

\section{Introduction}
Let $H_d$ be an Iwahori-Hecke algebra for the symmetric group $\Si_d$ with deformation parameter $q$, over a field $F$. Define $e$ to be the smallest integer so that $1 + q + \dots + q^{e-1} = 0$, setting $e=0$ if no such value exists.
To every partition $\la$ of $d$, there is a corresponding \emph{Specht module} $S^\la$ over $H_d$.
Brundan and Kleshchev show in \cite{BK} that $H_d$ is isomorphic to a certain algebra $R^{\La_0}_d$ known as a cyclotomic KLR (Khovanov-Lauda-Rouquier) algebra of type $A^{(1)}_{e-1}$ when $e\neq 0$, and type $A_\infty$ when $e = 0$.
The Specht modules are described in \cite{BKW} as modules over the cyclotomic KLR algebras.
This result is extended by Kleshchev, Mathas, and Ram in \cite{KMR}, in which Specht modules for the (full) KLR algebras $R_d$ are explicitly defined in terms of generators and relations. 
%In this paper we will be working with the KLR algebras $R_d$, which by \cite{BK}, means most our results also apply to the Iwahori-Hecke algebras of the symmetric group.
The purpose of this paper is to use this presentation to completely determine $\Hom_{R_d}(S^\mu, S^\la)$ when $\mu$ is an arbitrary partition and $\la$ is a hook.
Of course, when $e=0$ there are no nontrivial homomorphisms. Furthermore, our methods do not apply when $e=2$, so we make the assumption that $e \geq 3$.

%, say $\la = (d-k,1^{(k)})$ ; see Theorem~\ref{thm:mainHom}. We note here that our results only hold in characteristic at least 3, and that our Specht modules are dual to the classical versions of James~\cite{J}. Some special cases of the main theorem are notable. First of all, when $\la=(d)$ we recover (a dualized version of) the classical result of James \cite[Theorem 24.4]{J}. When $\la = (d-1,1)$, we obtain the following new result. Let $\nu_p$ and $\ell_p$ be as in Definition~\ref{def:pstuff}.

To state the main theorem, we need some notation. The element $z^\mu \in S^\mu$ is the standard cyclic generator of weight $\bi^\mu$, see Definition~\ref{DSpecht}.
When the $\bi^\mu$-weight space of $S^\la$ is nonempty, it will turn out that its top degree component is one-dimensional. In this case we take $[\si_\mu] \in S^\la$ to be any non-zero vector of that component.
For any weakly decreasing sequence of positive integers $\ba = (a_1, \dots, a_N)$ we define its \emph{Garnir content} $\Gc(\ba) \in F$ by
\begin{equation*}%\label{DGc}
    %\Gc(a_1, \dots, a_N) =  & \gcd\left(\binom{a_1}{1}\right., \binom{a_1}{2}, \dots, \binom{a_1}{a_2 - 1}, \\
    %         &\qquad  \binom{a_2}{1}, \binom{a_2}{2}, \dots, \binom{a_2}{a_3 - 1}, \dots, \\
    %         &\qquad  \binom{a_{N-1}}{1}, \binom{a_{N-1}}{2}, \dots, \left. \binom{a_{N-1}}{a_N - 1} \right),
    \Gc(\ba) = \gcd\left\{ \binom{a_i}{k} \mid 1 \leq k \leq a_{i+1}-1,\ 1 \leq i \leq N-1\right\}
\end{equation*}
with the convention that $\gcd(\emptyset) = 0$. %Thus, for example, $\Gc(a_1) = 0$.

\vspace{2mm}
\noindent
{\bf Main Theorem.}
{\em
  Let $\mu$ be an arbitrary partition and $\la = (d-k, 1^k)$ for $k \geq 0$. Then $\Hom_{R_d}(S^\mu, S^\la)$ is at most one-dimensional, spanned by a map satisfying $z^\mu \mapsto [\si_\mu]$, and $\dim \Hom_{R_d}(S^\mu, S^\la) = 1$ if and only if one of the following conditions holds:
  \begin{enumerate}
    \item there exist $n \in \{1, \dots, k+1\}$, $\ba \in (\Z_{>0})^n$, and $0 \leq m < e$ such that $\Gc(\ba) = 0$ and
$$\mu = (a_1 e, \dots, a_{n-1} e, a_n e - m, 1^{k-n+1}),$$
    \item $e$ divides $d$, there exist $n \in \{1, \dots, k\}$, $\ba \in (\Z_{>0})^n$, and $0 \leq m < e$ such that $\Gc(\ba) = 0$ and
$$\mu = (a_1 e, \dots, a_{n-1} e, a_n e - m, 1^{k-n+2}),\ \text{or}$$
    \item there exist $n > k+1$, $\ba \in (\Z_{>0})^n$ and $0 \leq m < e$ such that $\Gc(\ba) = 0$ and
$$\mu = (a_1 e, \dots, a_k e, a_{k+1}e - 1, \dots, a_{n-1} e - 1, a_n e - 1 - m).$$
  \end{enumerate}
}

This theorem generalizes James \cite[Theorem 24.4]{J}, which corresponds to $k=0$. Our Specht modules are the dual of the Specht modules defined in \cite{J}. We note here that the main theorem also allows us to determine all homomorphisms between Specht modules when the source is a hook, as follows; see \cite{KMR} for details. For any partition $\nu$ we define $\nu'$ to be the conjugate partition and $S_\nu$ to be the dual of $S^\nu$. There is an automorphism $\sgn$ of $R_d$ such that $S_\nu \cong (S^{\nu'})^\sgn$, and so
\begin{align*}
\Hom_{R_d}(S^\mu,S^\la) &\cong \Hom_{R_d}(S_\la, S_\mu) \cong \Hom_{R_d}((S^{\la'})^\sgn, (S^{\mu'})^\sgn) \\
&\cong \Hom_{R_d}(S^{\la'}, S^{\mu'}).
\end{align*}
Clearly, $\la$ is a hook if and only if $\la'$ is.

Section 2 introduces the notation to be used throughout the paper. In addition, we collect a few elementary facts which will be necessary in later sections.
Section 3 reviews the definitions of the affine and cyclotomic KLR algebras, and their universal Specht modules as introduced in \cite{KMR}.
Section 4 provides a study of the structure of the Specht modules $S^\la$, where $\la$ is a hook. Specifically, there is a basis for $S^\la$ given by standard tableaux on $\la$, and in Theorem~\ref{thm:KLRonBasis} we determine how the generators of the KLR algebra act on this basis.
In section 5, we look at homomorphisms from an arbitrary Specht module $S^\mu$ to $S^\la$, where $\la$ is a hook.
In section 6, we consider some examples.

\section{Preliminaries}
\subsection{Lie theoretic notation}
We collect some notation that will be used in the sequel; the interested reader is refered to~\cite{KMR} for more details. Let $e \in \{3, 4, \dots\}$ and $I := \Z/e\Z.$ Let $\Gamma$ be the quiver with vertex set~$I$, with a directed edge from~$i$ to~$j$ if $j = i - 1$. Thus~$\Gamma$ is a quiver of type $A^{(1)}_{e-1}$. We denote the simple roots by $\{\al_i\mid i\in I\},$ and define $Q_+ := \bigoplus_{i \in I} \Z_{\geq 0} \alpha_i$ to be the positive part of the root lattice. For $\alpha \in Q_+$ let $\height(\alpha)$ be the {\em height of~$\al$}. That is, $\height(\al)$ is the sum of the coefficients when $\al$ is expanded in terms of the $\alpha_i$'s.

Let~$\Si_d$ be the symmetric group on~$d$ letters and let $s_r = (r, r+1)$, for $1\leq r < d$, be the simple transpositions of~$\Si_d$. Then~$\Si_d$ acts on the left on the set~$I^d$ by place permutations.
If $\bi = (i_1, \dots , i_d) \in I^d$ then its {\em weight} is
$|\bi| := \alpha_{i_1} + \cdots + \alpha_{i_d} \in Q_+$.  The $\Si_d$-orbits on $I^d$ are the sets 
\[
  \words_\alpha := \{\bi \in I^d\mid \al=|\bi|\} 
\]
parametrized by all $\alpha \in Q_+$ of height $d$.

\subsection{Partitions}\label{SSPar}
Let $\Par_d$ be the set of all partitions of $d$ and put $\Par:=\bigsqcup_{d\geq 0}\Par_d$. 
The {\em Young diagram} of the partition $\mu\in \Par$ is 
$$
\{(a,b)\in\Z_{>0}\times\Z_{>0}\mid 1\leq b\leq \mu_a\}.
$$
The elements of this set
are the {\em nodes of $\mu$}. More generally, a {\em node} is any element of $\Z_{>0}\times\Z_{>0}$. 

To each node $A=(a,b)$ 
we associate its {\em residue}, which is the following element of $I=\Z/e\Z$: 
\begin{equation}\label{ERes}
\res A=(b-a)\pmod{e}.
\end{equation}
An {\em $i$-node} is a node of residue $i$.
Define the {\em residue content of $\mu$} to be
\begin{equation}\label{EContent}
\cont(\mu):=\sum_{A\in\mu}\al_{\res A} \in Q_+.
%\sum_{i\in I}m_i\al_i.
\end{equation}
Denote
$$
\Par_\al:=\{\mu\in\Par\mid \cont(\mu)=\al\}\qquad(\al\in Q_+).
$$

A node $A\in\mu$ is a {\em removable node (of~$\mu$)}\, if $\mu\setminus \{A\}$ is (the diagram of) a partition. A node $B\not\in\mu$ is an {\em addable node (for~$\mu$)}\, if $\mu\cup \{B\}$ is a partition. We use the notation
$$
\mu_A:=\mu\setminus \{A\},\qquad \mu^B:=\mu\cup\{B\}.
$$

\subsection{Tableaux}\label{SS:tableaux}
Let $\mu\in\Par_d$. 
A {\em $\mu$-tableau} $\T$ is obtained from the diagram of $\mu$ by 
inserting the integers $1,\dots,d$ into the nodes, allowing no repeats. 
If the node $A=(a,b)\in\mu$ is occupied by the integer $r$ in $\T$ then we write $r=\T(a,b)$ and set
$\res_\T(r)=\res A$. The {\em residue sequence} of~$\T$ is
\begin{equation}\label{EResSeq}
\bi(\T)=(i_1,\dots,i_d)\in I^d,
\end{equation}
where $i_r=\res_\T(r)$ is the residue of the node occupied by 
$r$ in $\T$ ($1\leq r\leq d$). 

%\Comment[Andrew]{I added $\bi^\kappa(\T)$ here and similar changes to Lemma~8.4 below.}

A $\mu$-tableau $\T$ is {\em row-strict} (resp. {\em column-strict}) if its entries increase from left to right (resp. from top to bottom) along the rows (resp. columns) of each component of $\T$. 
A $\mu$-tableau $\T$ is {\em standard} if it is row- and column-strict. 
 Let $\St(\mu)$ be the set of standard $\mu$-tableaux.

\iffalse
Let $\T$ be a $\mu$-tableau and suppose that $1\leq r\neq s\leq d$ and that $r=\T(a_1,b_1,m_1)$ and that
$s=\T(a_2,b_2,m_2)$. We write $r\nearrow_\T s$ if $m_1=m_2$, $a_1>a_2$, and $b_1<b_2$; informally, $r$ and $s$
are in the same component and~$s$ is strictly to the north-east of $r$ within that component.  The symbols $\rightarrow_\T,\searrow_\T,\downarrow_\T$ have the  similar obvious meanings. For example,
$r\downarrow_\T s$ means that $r$ and $s$ are located in the same column of the same component of~$\T$ and
that~$s$ is in a strictly lower row of~$\T$ than~$r$. 
\fi

%\subsection{Degree of a standard tableau}\label{SSDeg}
Let $\mu\in\Par$, $i\in I$, and $A$ be a removable $i$-node
%and $B$ be an addable $i$-node 
of $\mu$. We set
\begin{equation}\label{EDMUA}
d_A(\mu)=\#\SetBox{addable $i$-nodes of $\mu$\\[-4pt] strictly below $A$}
                -\#\SetBox[38mm]{removable $i$-nodes of $\mu$\\[-4pt] strictly below $A$}.
\end{equation}
%Also, for $i\in I$, define
%\vspace{.5 mm}
%\begin{equation}\label{EDKWeight}
%d_i(\mu):=\#\{\text{addable $i$-nodes of $\mu$}\}-\#\{\text{removable $i$-nodes of $\mu$}\}.
%\end{equation} 
%It is easy to see \cite[Lemma 3.11]{BKW} that:
%\begin{equation}\label{EDI}
%d_i(\mu)=(\La-\al,\al_i)\qquad(\mu \in \Par_\al).
%\end{equation}

Given $\mu \in \Par_d$ and $\T \in \St(\mu)$, the {\em degree} of $\T$ is defined in
\cite[section~3.5]{BKW} inductively as follows. If $d=0$, then $\T$ is the empty tableau $\emptyset$, and
we set $\deg(\T):=0$.  Otherwise, let $A$ be the node occupied by $d$ in $\T$. Let $\T_{<d}\in\St(\mu_A)$
be the tableau obtained by removing this node and set
\begin{equation}\label{EDegTab}
\deg(\T):=d_A(\mu)+\deg(\T_{<d}).
\end{equation}

The group $\Si_d$ acts on the set of $\mu$-tableaux from the left by acting on the entries of the tableaux.
Let $\T^\mu$ be the $\mu$-tableau in which the numbers $1,2,\dots,d$ appear in order from left to right along the successive rows,
working from top row to bottom row.

Set 
\begin{equation}\label{EBIMu}
\bi^\mu:=\bi(\T^\mu).
\end{equation} 
For each $\mu$-tableau $\T$ define permutations $w^\T \in \Si_d$ by the equation
\begin{equation}\label{EWT}
w^\T  \T^\mu=\T.
\end{equation}

\subsection{Binomial coefficients}\label{ss:binom}

In this section, we state some elementary theorems about binomial coefficients that will be useful later. Let $p$ be a fixed prime.

\begin{Definition}\label{def:pstuff}
  For $n \in \Z_{>0}$ we define
  \begin{enumerate}
    \item $\nu_p(n) = \max \{ i \mid p^i \text{ divides } n \}$
    \item $\ell_p(n) = \min \{ i \mid p^i > n \}$.
  \end{enumerate}
  We also set $\ell_p(0) = -\infty$.
\end{Definition}

The following can be easily derived from \cite[Lemma 22.4]{J}.
\begin{Lemma}\label{cor:pBinom}
  For any $a, b \in \Z_{>0}$, one has
  \[
    p \left| \binom{a}{k}\right. \textup{ for } k = 1,\dots, b \qquad \Leftrightarrow \qquad \nu_p(a) \geq \ell_p(b).
  \]
\end{Lemma}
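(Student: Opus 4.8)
The plan is to reduce the statement to the single-prime fact in \cite[Lemma~22.4]{J}. Recall that Lemma~22.4 of James characterizes when a prime $p$ divides a single binomial coefficient $\binom{a}{k}$ in terms of the base-$p$ digits of $a$ and $k$: writing $a = \sum a_j p^j$ and $k = \sum k_j p^j$ in base $p$, one has $p \mid \binom{a}{k}$ if and only if $k_j > a_j$ for some $j$ (Kummer's theorem / Lucas' theorem in the ``carry'' formulation). So the first step is to unwind what ``$p \mid \binom{a}{k}$ for all $k = 1, \dots, b$'' means digit-by-digit.

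Next I would translate the two sides. On the right, $\nu_p(a) \geq \ell_p(b)$ says two things at once: first, $\ell_p(b)$ is by definition the number of base-$p$ digits of $b$ (the least $i$ with $p^i > b$), so $\ell_p(b) = t+1$ where $b_t$ is the top nonzero digit of $b$, with the convention $\ell_p(0) = -\infty$ handling $b = 0$ vacuously; second, $\nu_p(a) \geq \ell_p(b)$ says $p^{\ell_p(b)} \mid a$, i.e. the bottom $\ell_p(b)$ base-$p$ digits of $a$ all vanish: $a_0 = a_1 = \dots = a_{\ell_p(b)-1} = 0$. The plan is then to show this is exactly equivalent to: for every $k$ with $1 \leq k \leq b$, some base-$p$ digit $k_j$ exceeds $a_j$.

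For the forward direction ($\Leftarrow$), assume $a_0 = \dots = a_{t} = 0$ where $t+1 = \ell_p(b)$. Any $k$ with $1 \leq k \leq b$ is nonzero and satisfies $k < p^{t+1}$, so all its nonzero digits occur among positions $0, \dots, t$; picking any position $j \leq t$ with $k_j \neq 0$ gives $k_j > 0 = a_j$, hence $p \mid \binom{a}{k}$. For the converse ($\Rightarrow$), I argue contrapositively: if $\nu_p(a) < \ell_p(b)$, then there is a least index $j_0 \leq t$ with $a_{j_0} \neq 0$; set $k = p^{j_0}$. Then $1 \leq k \leq p^{t} \leq b$ (using $p^t \leq b$, which holds because $\ell_p(b) = t+1$ forces $p^t \leq b$), and the only nonzero digit of $k$ is $k_{j_0} = 1 \leq a_{j_0}$, while all other digits of $k$ are $0 \leq a_j$; so no digit of $k$ exceeds the corresponding digit of $a$, whence $p \nmid \binom{a}{k}$, contradicting the hypothesis. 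One small edge case to check: if $b = 0$ the left side is vacuously true and the right side is $\nu_p(a) \geq -\infty$, also true, so the equivalence holds trivially; and throughout $a \geq 1$ is assumed so $\nu_p(a)$ is finite and well-defined.

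I do not anticipate a genuine obstacle here — the content is entirely in \cite[Lemma~22.4]{J}, and the work is bookkeeping with base-$p$ expansions. The one point requiring a little care is the precise interaction of the two conventions $\ell_p(0) = -\infty$ and $\gcd(\emptyset) = 0$ used elsewhere in the paper, and making sure the chosen witness $k = p^{j_0}$ genuinely lies in the range $1 \leq k \leq b$; this is where I would be most careful to get the inequality $p^t \leq b < p^{t+1}$ exactly right from the definition of $\ell_p$.
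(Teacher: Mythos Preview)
Your proposal is correct and follows precisely the approach the paper indicates: the paper's entire argument is the single sentence ``can be easily derived from \cite[Lemma 22.4]{J},'' and you have supplied the base-$p$ digit bookkeeping that makes this derivation explicit. Your witness $k = p^{j_0}$ in the contrapositive direction and the inequality $p^{j_0} \leq p^t \leq b$ (from $\ell_p(b) = t+1$) are exactly right; the only superfluous remark is the $b = 0$ case, since the statement assumes $b \in \Z_{>0}$.
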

Recall the definition of $\Gc(a_1, \dots, a_N)$ from the introduction. Then
\begin{Corollary}\label{cor:pGc}
  We have $p \ |\ \Gc(a_1, \dots, a_N)$ if and only if 
$$\nu_p(a_i) \geq \ell_p(a_{i+1}-1)  \text{ for } 1 \leq i \leq N-1.$$
\end{Corollary}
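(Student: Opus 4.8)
The plan is to deduce Corollary~\ref{cor:pGc} directly from Lemma~\ref{cor:pBinom} by unwinding the definition of $\Gc$. Recall that
\[
\Gc(a_1,\dots,a_N)=\gcd\left\{\binom{a_i}{k}\;\middle|\;1\le k\le a_{i+1}-1,\ 1\le i\le N-1\right\},
\]
so that a prime $p$ divides $\Gc(a_1,\dots,a_N)$ precisely when $p$ divides every element of the generating set, i.e.\ when $p\mid\binom{a_i}{k}$ for all $i$ with $1\le i\le N-1$ and all $k$ with $1\le k\le a_{i+1}-1$. For each fixed $i$ in the range $1\le i\le N-1$, the condition "$p\mid\binom{a_i}{k}$ for $k=1,\dots,a_{i+1}-1$" is exactly the left-hand side of Lemma~\ref{cor:pBinom} with $a=a_i$ and $b=a_{i+1}-1$, which by that lemma is equivalent to $\nu_p(a_i)\ge\ell_p(a_{i+1}-1)$. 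Taking the conjunction over all $i$ yields the claimed equivalence.

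The one point requiring a small remark is the degenerate case $a_{i+1}=1$, where the index range $1\le k\le a_{i+1}-1$ is empty. On the combinatorial side this $i$ contributes no elements to the generating set of the $\gcd$ (consistent with the convention $\gcd(\emptyset)=0$ noted in the introduction, in case all such ranges are empty). On the arithmetic side, $\ell_p(a_{i+1}-1)=\ell_p(0)=-\infty$ by Definition~\ref{def:pstuff}, so the inequality $\nu_p(a_i)\ge\ell_p(a_{i+1}-1)$ holds vacuously. Thus the empty-range case is handled consistently on both sides and the equivalence goes through unchanged; I would simply note this parenthetically rather than belabor it.

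I do not anticipate any genuine obstacle here: the corollary is a purely formal consequence of Lemma~\ref{cor:pBinom} together with the elementary fact that $p$ divides a $\gcd$ iff it divides each generator. The only thing to be careful about is bookkeeping — making sure the quantifiers "for all $i$" and "for all $k$ in the $i$-dependent range" are distributed correctly when passing between the single $\gcd$ condition and the family of conditions indexed by $i$ — and the edge case above. So the proof will be three or four lines.
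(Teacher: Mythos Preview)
Your proposal is correct and matches the paper's approach exactly: the paper gives no proof of this corollary, treating it as an immediate consequence of Lemma~\ref{cor:pBinom} and the definition of $\Gc$. Your explicit handling of the degenerate case $a_{i+1}=1$ is a nice touch that the paper leaves implicit.
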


\subsection{Shuffles}\label{ss:shuf}

  In this section, we fix $e \in \{0, 3, 4, 5, \dots\}$ and $I := \Z/e\Z$. Given $a, b \in \Z_{\geq 0}$ we define 
$$\Sh(a,b) := \{\sigma \in \Si_{a+b}\ |\ \sigma(1) < \dots < \sigma(a) \text{ and }\sigma(a+1) < \dots < \sigma(a+b)\}.$$
% These are the minimal length coset representatives of $\Si_a \times \Si_b$ in $\Si_{a+b}$.
For $\bi \in I^a, \bi' \in I^b$ write $\bi \bi' \in I^{a+b}$ for their concatenation, and define
\begin{align*}
\Shuf(\bi, \bi') &:= \{\sigma \cdot \bi \bi'\ |\ \sigma \in \Sh(a,b)\},\\
%and call the elements of this set shuffles of~$\bi$ with~$\bi'$.
%Given $\bj \in \Shuf(\bi, \bi')$, define
\Sh(\bj; \bi, \bi') &:= \{ \sigma \in \Sh(a,b)\ |\ \sigma \cdot \bi \bi' = \bj\},\text{ and}\\
%For a word $\bi \in I^d$, define
H(\bi) &:= \langle s_m\ |\ s_m \bi = \bi \rangle < \Si_a.
\end{align*}
For $i \in I$, define the elements of $I^a$
\begin{align}
  S^+(i, a) &:= (i, i + 1, \dots, i + a - 1),\label{def:IncSeg}\\
  S^-(i, a) &:= (i, i - 1, \dots, i - a + 1).\label{def:DecSeg}
\end{align}

Fix $j, k \in I$, $a, b \in \Z_{\geq 0}$, and write $S^+ := S^+(j, a)$ and $S^- := S^-(k, b)$. If $\bi \in \Shuf(S^+, S^-)$ then~$\bi$ cannot contain three or more equal adjacent indices. In this case $H(\bi)$ is an elementary 2-subgroup of $\Si_{a+b}$.

\begin{Proposition}\label{prop:Shortest}
  For every   $\bi \in \Shuf(S^+, S^-)$, there is a unique element $\si_\bi \in \Sh(\bi; S^+, S^-)$ of minimal length. Furthermore, $\Sh(\bi; S^+, S^-) = \{h \sigma_\bi\ |\ h \in H(\bi) \}$.
\end{Proposition}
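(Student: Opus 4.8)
The plan is to analyze the structure of $\Shuf(S^+, S^-)$ combinatorially, exploiting the fact that $S^+ = S^+(j,a)$ is strictly increasing by steps of $+1$ and $S^- = S^-(k,b)$ is strictly decreasing by steps of $-1$. First I would observe that because $\bi \in \Shuf(S^+, S^-)$ cannot contain three equal adjacent indices (as noted just before the statement), any two shuffles $\sigma, \tau \in \Sh(\bi; S^+, S^-)$ differ only in how they resolve ``ties'': positions where an entry of $S^+$ and an entry of $S^-$ carry the same residue and land in adjacent slots of $\bi$. The key point is that $\sigma^{-1}\tau$ permutes the fibers of $\bi$, i.e.\ lies in the Young subgroup $\Si_{\bi} := \prod_{m} \Si_{\{r : i_r = m\}}$; but since $\sigma,\tau$ are both shuffles (they preserve the relative order within the $S^+$-block and within the $S^-$-block), $\sigma^{-1}\tau$ can only swap an $S^+$-entry with an $S^-$-entry of equal residue, and only when they occupy adjacent positions. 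This forces $\sigma^{-1}\tau \in H(\bi)$, and conversely every $h \in H(\bi)$ applied to a shuffle gives another shuffle (since $h$ fixes $\bi$ and, being a product of the commuting transpositions $s_m$ with $s_m \bi = \bi$, does not disturb the intra-block orders — here one uses that adjacent equal entries in $\bi$ must come one from $S^+$ and one from $S^-$). This already gives $\Sh(\bi; S^+, S^-) = H(\bi)\sigma$ for any fixed $\sigma$ in it, and in particular the set is a single right coset, hence of size $|H(\bi)| = 2^t$ where $t$ is the number of adjacent equal pairs.

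Next, for the existence and uniqueness of a minimal-length element: since $\Sh(\bi;S^+,S^-) = H(\bi)\sigma$, I would pick any $\sigma$ in the set and then note that minimizing length over the coset $H(\bi)\sigma$ is governed by the generators $s_m$ ($m \in$ the relevant index set) of $H(\bi)$. The crucial structural fact is that the transpositions generating $H(\bi)$ have \emph{disjoint supports} (consecutive pairs $\{m, m+1\}$ with $i_m = i_{m+1}$ cannot overlap, precisely because no three adjacent entries are equal), so $H(\bi) \cong (\Z/2)^t$ with an explicit set of $t$ commuting generators, and for each generator $s_m$, exactly one of $\sigma$ and $s_m\sigma$ has the smaller length (they differ by $1$). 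I would then argue that there is a unique choice — call it $\sigma_\bi$ — that is simultaneously reduced past every generator, i.e.\ $\ell(s_m \sigma_\bi) > \ell(\sigma_\bi)$ for all the relevant $m$: concretely, $\sigma_\bi$ is the shuffle that, at each tie, places the $S^+$-entry before the $S^-$-entry (or whichever convention makes $s_m\sigma_\bi$ longer), and because the generators commute and have disjoint supports, the length function on $H(\bi)\sigma$ is $\ell(h\sigma_\bi) = \ell(\sigma_\bi) + \ell(h)$ with $\ell$ additive over the disjoint transpositions — so $\sigma_\bi$ is the unique minimum.

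The main obstacle I anticipate is making the "disjoint support / additivity of length" step fully rigorous: one must verify that for $h = s_{m_1}\cdots s_{m_r}$ a product of distinct commuting generators of $H(\bi)$, we genuinely have $\ell(h\sigma_\bi) = \ell(\sigma_\bi) + r$, not merely $\ell(\sigma_\bi) \le \ell(h\sigma_\bi)$. This should follow from a reduced-word / inversion-counting argument: passing from $\sigma_\bi$ to $s_m\sigma_\bi$ creates exactly one new inversion (the pair at positions $m, m+1$), and because the supports $\{m_i, m_i+1\}$ are pairwise disjoint these new inversions are distinct and independent, so they simply accumulate. Equivalently, one checks that $\sigma_\bi$ is the minimal-length double-coset-style representative directly by showing it has no inversion among any pair it could "uncross" via an $s_m \in H(\bi)$. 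Once additivity is in hand, uniqueness of the minimal element is immediate, and the description $\Sh(\bi;S^+,S^-) = \{h\sigma_\bi : h \in H(\bi)\}$ has already been established, completing the proof.
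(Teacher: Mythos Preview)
Your approach is correct and genuinely different from the paper's. The paper proceeds by induction on $a+b$: it introduces an ``erase'' operator $E$ that deletes the rightmost letter of $\bi$, and a three-case analysis (depending on whether the last position, or the last two positions, are forced or ambiguous) reduces to a strictly shorter shuffle problem. You instead give a direct structural argument: identify $\Sh(\bi;S^+,S^-)$ as a single left coset $H(\bi)\sigma$, and then exploit that $H(\bi)$ is a standard parabolic subgroup (indeed an elementary abelian $2$-group whose simple generators have pairwise disjoint supports) to extract the unique minimal-length coset representative via inversion counting. Your route is more conceptual and makes the role of $H(\bi)$ transparent---once the coset description is in hand, one could even just quote the standard fact about minimal-length parabolic coset representatives. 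The paper's induction is more self-contained and yields a recursive construction of $\sigma_\bi$.

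Two small points to tighten. First, you write $\sigma^{-1}\tau$ where you want $\tau\sigma^{-1}$: it is $\tau\sigma^{-1}$ that stabilises $\bi$ (while $\sigma^{-1}\tau$ stabilises $S^+S^-$), and you need $\tau=(\tau\sigma^{-1})\sigma$ with $\tau\sigma^{-1}\in H(\bi)$. Second, the clause ``and only when they occupy adjacent positions'' is the heart of the matter and deserves one line of justification rather than assertion. The clean way is to track the running count $p_r$ of $+$'s among the first $r$ letters: if the two shuffles first disagree at position $r$, then $p_r^\sigma$ and $p_r^\tau$ differ by one, and a short residue computation (using $e\ge 3$ to rule out the $(+,-)$-then-$(+,-)$ possibility) shows the only way both shuffles can produce the same $i_{r+1}$ is to make the opposite choice at position $r+1$; this forces $i_r=i_{r+1}$ and resynchronises the counts. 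With that step made explicit, your additivity-of-length argument via disjoint inversions goes through exactly as you describe.
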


\begin{proof}
Write $S^+ = (j_1, \dots, j_a)$ and $S^- = (k_1, \dots, k_b)$, and for every integer $d \geq 1$ introduce the following notation. For $\si \in \Si_d$, define $E\si \in \Si_{d-1}$ by
$$E\si(r) = \begin{cases} \si(r),& \text{ for } r=1,\dots,\si^{-1}(d)-1;\\
  \si(r+1),& \text{ for } r=\si^{-1}(d),\dots,d-1. \end{cases}$$
 Similarly, for $\bi = (i_1, \dots, i_d) \in I^d$, define $E\bi = (i_1, \dots, i_{d-1})$.

The proposition will be proved by induction on $a+b$. If $a+b = 1$ or $2$, or if $a = 0$ or $b = 0$ the claim follows immediately.
Suppose $a+b \geq 3$, with $a, b \geq 1$, and assume the induction hypothesis.
Fix $\bi \in \Shuf(S^+, S^-)$.
We distinguish several cases based on the values of $i_{a+b}, i_{a+b-1}, j_a,$ and $k_b.$

Case 1: $i_{a+b} = j_a$ and either $j_a \neq k_b$ or $i_{a+b-1}=i_{a+b}-1$. Given $\si \in \Sh(\bi; S^+, S^-)$, we must have $\si(a) = a+b$. Then the map 
$$\Sh(\bi; S^+, S^-) \to \Sh(E\bi; ES^+,S^-),\ \si \mapsto E\si$$
is a bijection, and moreover $\ell(E\si) = \ell(\si) - b$ and $H(\bi) \cong H(E\bi)$. We define $\si_\bi$ by $E\si_\bi = \si_{E\bi}$, which completes the induction in this case.

Case 2: $i_{a+b} = k_b$ and either $j_a \neq k_b$ or $i_{a+b-1}=i_{a+b}+1$. Here the map
$$\Sh(\bi; S^+, S^-) \to \Sh(E\bi; S^+, ES^-),\ \si \to E\si$$
gives a bijection as above.

Case 3: $i_{a+b} = i_{a+b-1} = j_a = k_b$. We must either have $\si(a) = a+b$ and $\si(a+b) = a+b-1$ or $\si(a)=a+b-1$ and $\si(a+b) = a+b$. Thus, we get a well-defined map
$$\Sh(\bi; S^+, S^-) \to \Sh(EE\bi; ES^+, ES^-),\ \si \mapsto EE\si$$
which is two-to-one, with $s_{a+b-1} \si$ and $\si$ having the same image. This proves the claim, since $H(\bi) \cong H(EE\bi) \times \<s_{a+b-1}\>$.
\end{proof}

\section{KLR algebras and universal Specht modules}

\subsection{KLR algebras}
Let $\O$ be a commutative ring with identity and $\al\in Q_+$.  There is a unital $\O$-algebra $R_\al=R_\al(\O)$ called a {\em Khovanov-Lauda-Rouquier (KLR) algebra}, first defined in \cite{KL1,KL2,R}. We follow the notations and conventions of \cite{KMR}, so $R_\al$ is generated by elements
\begin{equation}\label{EKLGens}
\{e(\bi)\:|\: \bi\in \words_\al\}\cup\{y_1,\dots,y_{d}\}\cup\{\psi_1, \dots,\psi_{d-1}\},
\end{equation}
subject to some explicit relations.

The algebras $R_\al$ have $\Z$-gradings determined by setting 
$e(\bi)$ to be of degree 0,
$y_r$ of degree $2$, and
$\psi_r e(\bi)$ of degree $-a_{i_r,i_{r+1}}$
for all $r$ and $\bi \in \words_\al$.

For a graded vector space $V=\oplus_{n\in \Z} V_n$, with finite dimensional graded components its {\em graded dimension} is $\qdim \, V:=\sum_{n \in \Z}  (\dim V_n)q^n\in\Z[[q,q^{-1}]]$.

%Note that $R_\al(\Z)\otimes_\Z \O\cong R_\al(\O)$ and $R_\al^\La(\Z)\otimes_\Z \O\cong R_\al^\La(\O)$.

\subsection{Universal Specht modules}\label{SecSpecht}
Fix $\mu \in \Par_\al$. In this section we define the Specht module $S^\mu$ over $R_\al$, mostly following \cite{KMR}. A node $A=(x,y) \in \mu$ is called a \emph{Garnir node} of $\mu$ if also $(x,y+1) \in \mu$. We define the \emph{Garnir belt} of $A$ to be the set $\Belt^A$ of nodes of $\mu$ containing $A$ and all nodes directly to the right of $A$, along with the node directly below $A$ and all nodes directly to the left of this node. Explicitly,
$$\Belt^A = \{(x, z) \in \mu \mid y \leq z \leq \mu_x\} \cup \{(x+1, z) \in \mu \mid 1 \leq z \leq y\}.$$
We define an \emph{$A$-brick} to be a set of $e$ successive nodes in the same row
$$\{(w,z), (w,z+1),\dots,(w,z+e-1)\} \subseteq \Belt^A$$
such that $\res(z,w) = \res(A)$. Let $k \geq 0$ be the number of bricks in $\Belt^A$. We label the bricks
$$B^A_1, B^A_2, \dots, B^A_k$$
going from left to right along row $x$, and then from left to right along row $x+1$. Define $C^A$ to be the set of nodes in row $x$ of $\Belt^A$ not contained in any $A$-brick, and $D^A$ to be the set of nodes in row $x+1$ of $\Belt^A$ not contained in any $A$-brick. 
%$$\Belt^A = B^A_1 \amalg B^A_2 \amalg \dots \amalg B^A_k \amalg C^A \amalg D^A.$$
Define $f$ to be the number of $A$-bricks in row $x$ of $\Belt^A$. %Thus $\Belt^A$ is partitioned as in the following picture.

%The \emph{$A$-Garnir tableau} is the $\mu$-tableau $\G^A$ formed by taking $\T^\mu$ and rearranging the values of the nodes in $\Belt^A$ in order, starting at the bottom left and ending at the top right.

Let $u, u+1, \dots, v$ be the values in $\Belt^A$ in the standard tableau $\T^\mu$. We obtain a new tableau $\T^A$ by placing the numbers $u, u+1 \dots, v$ from left to right in the following order: $D^A$, $B^A_1$, $B^A_2$, \dots, $B^A_k$, $C^A$. The rest of the numbers are placed in the same positions as in $T^\mu$. Define $\bi^A := \bi(\T^A)$.

Assume that $k > 0$, and let $n = \T^A(A)$. Define
$$
w^A_r = \prod_{z=n+re-e}^{n+re-1} (z, z+e) \in \Si_d \qquad (1\leq r < k).
$$
Informally, $w^A_r$ swaps the bricks $B^A_r$ and $B^A_{r+1}$. The elements $w^A_1, w^A_2, \dots, w^A_{k-1}$ are Coxeter generators of the group
$$\Si^A := \< w^A_1, w^A_2, \dots, w^A_{k-1} \> \cong \Si_k.$$
By convention, if $k=0$ we define $\Si^A$ to be the trivial group.

Recall that $f$ is the number of $A$-bricks in row $x$ of $\Belt^A$. Let $\D^A$ be the set of minimal length left coset representatives of $\Si_f \times \Si_{k-f}$ in $\Si^A \cong \Si_k$. Define the elements of $R_\al$
$$\si^A_r := \psi_{w^A_r} e(\bi^A) \quad\text{and}\quad \tau^A_r := (\si^A_r + 1)e(\bi^A).$$
Let $u \in \D^A$ with reduced expression $u = w^A_{r_1} \dots w^A_{r_a}$. Since every element of $\D^A$ is fully commutative, the element
$\tau^A_u := \tau^A_{r_1} \dots \tau^A_{r_a}$
does not depend upon this reduced expression. We now define the \emph{Garnir element} to be
$$g^A := \sum_{u \in \D^A} \tau^A_u \psi^{\T^A} \in R_\al.$$

\begin{Definition} \label{DSpecht}%{\rm \cite{}}%{\bf ()}
Let $\al\in Q_+$, $d=\height(\al)$, and $\mu\in\Par_\al$. Define the following left ideals of $R_\al$.
\begin{enumerate}
\item[{\rm (i)}] $J_1^\mu = \<e(\bj) - \de_{\bj,\bi^\mu} \mid \bj\in \words_\al\>$;
\item[{\rm (ii)}] $J_2^\mu = \<y_r \mid r=1,\dots,d\>$;
\item[{\rm (iii)}] $J_3^\mu = \<\psi_r \mid \text{ $r$ and $r+1$ appear in the same row of } \T^\mu\>$;
\item[{\rm (iv)}] $J_4^\mu = \< g^A \mid \text{ Garnir nodes } A \in \mu\>$. 
\end{enumerate}
Let $J^\mu = J_1^\mu + J_2^\mu + J_3^\mu + J_4^\mu$ and define the {\em universal graded Specht module}\, $S^\mu:=R_\al / J^\mu \<\deg(\T^\mu)\>$. Define $z^\mu := 1 + J^\mu \in S^\mu$.%, having degree $\deg(\T^\mu)$.
\end{Definition}

\section{Specht modules corresponding to hooks}

For the rest of the paper, we assume that $e \geq 3$. In this section we fix two integers $d \geq k \geq 0$. We set $\la := (d-k,1^k)$ and $\al := \cont(\la)$. We write $\Sh^\la$ for the image of $\Sh(d-k-1, k)$ under the embedding $\Si_{d-1} \to \Si_d$ determined by $s_i \mapsto s_{i+1}$. For $\si \in \Sh^\la$, we define $[\si] := \psi_{\si} z^\la \in S^\la$ of weight $\bi_\si := \si \bi^\la$.
Note that $\Sh^\la = \{\si \in \Si_d\ |\ \si \T^\la \textup{ is standard}\}$. Therefore, by \cite[Corollary 6.24]{KMR}, the set $\{[\si]\ |\ \si \in \Sh^\la\}$ is a basis of $S^\la$.%, which we call the \emph{standard basis}.

\begin{Remark}
  Recall that in order to define the element $\psi_\si \in R_\al$ we needed to fix a reduced decomposition for $\si$. However, every $\si \in \Sh^\la$ is fully commutative, and so the element $\psi_\si$ is independent of this choice. Thus $[\si] \in S^\la$ only depends on $\si$.
\end{Remark}

\begin{Definition}
  Given $\si \in \Sh^\la$, the strands in the braid diagram of $\si$ beginning at positions $2, 3, \dots, d - k$ are called \emph{arm strands}, and we define $\Arm(\si) = \{\si(2), \dots, \si(d - k)\}$. Similarly, the strands beginning at positions $d - k + 1, \dots, d$ are called \emph{leg strands}, and $\Leg(\si) = \{\si(d - k + 1), \dots, \si(d)\}$
\end{Definition}

%  We futhermore define $\Sh^\la(\bi) = \{\si \in \Sh^\la\ |\ \wt([\si]) = \bi\}$.

The following theorem tells us how the KLR generators act on the standard basis of $S^\la$.

\begin{Theorem}\label{thm:KLRonBasis}
  Let $\si \in \Sh^\la$ and write $\bi_\si = (i_1, \dots, i_d)$. Then
  \begin{enumerate}
    \item For $\bj \in \words_\al$, $e(\bj) [\si] = \de_{\bi_\si, \bj} [\si]$.
    \item For $1 \leq r \leq d$,
          \[ y_r [\si] = \begin{cases}
             -[s_r \si],& \textup{ if $i_r = i_{r+1}$, $r \in \Leg(\si)$, and $r+1 \in \Arm(\si)$}\\
             [s_{r-1} \si],& \textup{ if $i_{r-1} = i_r$, $r-1 \in \Leg(\si)$, and $r \in \Arm(\si)$}\\
             0,& \textup{ otherwise.}
          \end{cases} \]
    \item We have $\psi_1 [\si] = 0$, and for $2 \leq r \leq d-1$, 
          \[ \psi_r [\si] = \begin{cases}
             [s_r \si],& \textup{ if $r \in \Arm(\si)$ and $r+1 \in \Leg(\si)$, or if $i_r \nslash i_{r+1}$}\\
%             0,& \textup{ if $r \in \Leg(\si)$, $r+1 \in \Arm(\si)$, and $i_r = i_{r+1}$}\\
             [s_{r+1} s_r \si],& \textup{ if $r \in \Leg(\si)$, $r+1, r+2 \in \Arm(\si)$, and $i_r \to i_{r+1}$}\\
             -[s_r s_{r+1} \si],& \textup{ if $r-1, r \in \Leg(\si)$, $r+1 \in \Arm(\si)$, and $i_r \leftarrow i_{r+1}$}\\
             [s_r s_{r-1} \si],& \textup{ if $r-1 \in \Leg(\si)$, $r, r+1 \in \Arm(\si)$, and $i_r = i_{r-1}$}\\
             -[s_{r-1} s_r \si],& \textup{ if $r, r+1 \in \Leg(\si)$, $r+2 \in \Arm(\si)$, and $i_{r+1} = i_{r+2}$}\\
             0,& \textup{ otherwise.}
          \end{cases} \]
  \end{enumerate}
\end{Theorem}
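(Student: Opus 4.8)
The plan is to compute each generator's action on $[\sigma] = \psi_\sigma z^\lambda$ by pushing the generator through the braid $\psi_\sigma$ to reach $z^\lambda$, where the defining relations $J^\lambda_1,\dots,J^\lambda_4$ give immediate simplifications. Part (i) is nearly automatic: $\psi_\sigma z^\lambda$ has weight $\sigma\bi^\lambda = \bi_\sigma$ since $z^\lambda$ has weight $\bi^\lambda$ and conjugating $e(\bi^\lambda)$ past $\psi_\sigma$ turns it into $e(\sigma\bi^\lambda)$ on the left; then $e(\bj)e(\bi_\sigma) = \delta_{\bj,\bi_\sigma}e(\bi_\sigma)$. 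For part (ii), I would use the KLR relation $y_r\psi_s = \psi_s y_{r}$ for $|r-s|>1$ together with $\psi_r y_r e(\bi) = (y_{r+1}\psi_r - \delta_{i_r,i_{r+1}})e(\bi)$ and its mirror, to slide $y_r$ rightward through the strands of $\sigma$ until it lands on some $y_j z^\lambda = 0$. Each time $y_r$ passes a crossing of two equal-residue strands, it picks up a correction term $\pm\psi_{\sigma'}z^\lambda = \pm[\sigma']$; the combinatorics of $\Sh^\lambda$ (arm vs.\ leg strands, fully commutative words) ensures at most one such correction survives, and a short case check on which strand $r$ sits on — arm or leg — together with the condition $i_r=i_{r+1}$ produces exactly the three cases listed. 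The key point is that an equal-residue adjacency in $\bi_\sigma$ can only arise from an arm strand and a leg strand crossing (since $S^+$ and $S^-$ are the increasing/decreasing segments of residues along arm and leg).

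For part (iii), the statement $\psi_1[\sigma]=0$ follows because strand $1$ never moves ($\sigma(1)=1$ for all $\sigma\in\Sh^\lambda$), so $\psi_1\psi_\sigma z^\lambda = \psi_\sigma\psi_1 z^\lambda$ and $\psi_1 z^\lambda = 0$ by $J^\lambda_3$ (entries $1,2$ are in the same row of $\T^\lambda$... actually $2$ is in row $1$, $1$ is in row $1$, so $\psi_1\in J_3^\lambda$). For $2\le r\le d-1$ I would again commute $\psi_r$ through $\psi_\sigma$. When $\psi_r$ meets the two strands ending at positions $r,r+1$ of the braid, there are two regimes: if those strands do not cross in $\sigma$ (i.e.\ $\sigma^{-1}(r)<\sigma^{-1}(r+1)$), then $\psi_r\psi_\sigma = \psi_{s_r\sigma}$ provided $s_r\sigma$ is still a reduced/fully commutative element of $\Sh^\lambda$ — this is the "arm then leg" case and also, via the quadratic relation $\psi_r^2 e(\bi) = Q_{i_r,i_{r+1}}(y)e(\bi)$ combined with $J_2^\lambda$ killing the $y$'s, the case $i_r\nslash i_{r+1}$ where the quadratic polynomial is a unit times identity... more precisely when $i_r\nslash i_{r+1}$ one shows $\psi_r[\sigma]=[s_r\sigma]$ directly. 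When the strands do cross in $\sigma$, I apply the braid relation $\psi_r\psi_{r+1}\psi_r e(\bi) = (\psi_{r+1}\psi_r\psi_{r+1} + (\text{correction}))e(\bi)$ for $i_r = i_{r+2}$, $i_r\ne i_{r+1}$ with $i_r\leftrightarrow i_{r+1}$ an edge, which is exactly where the length-two results $\pm[s_{r\pm1}s_r\sigma]$ come from; the sign and which of $s_rs_{r+1}$ vs $s_{r+1}s_r$ appears is dictated by the direction of the arrow $i_r\to i_{r+1}$ or $i_r\leftarrow i_{r+1}$ and by whether the relevant strands are arm or leg strands. The remaining subcase $i_{r-1}=i_r$ (two arm strands, one leg strand arriving) uses the KLR braid relation in the form with a $y$-correction that is then absorbed using part (ii) or $J_2^\lambda$.

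The main obstacle, and where most of the work lies, is the bookkeeping in part (iii): verifying that in each geometric configuration the resulting element is again of the form $\pm[\sigma']$ with $\sigma'\in\Sh^\lambda$ (rather than a genuinely longer or non-fully-commutative word), and that all the error terms from the quadratic and braid relations either vanish (because they contain a $y_j$, killed by $J_2^\lambda$, or a $\psi_j$ with $j,j+1$ in the same row, killed by $J_3^\lambda$) or collapse to the stated single term. This requires a careful enumeration of the possible local patterns of $\bi_\sigma$ near position $r$ — using the Shuffle description of $\S^\lambda$ from Section~\ref{ss:shuf} and the fact (Proposition~\ref{prop:Shortest}) that shuffle representatives are essentially unique up to the elementary $2$-group $H(\bi_\sigma)$ — to confirm the case list is exhaustive and the "otherwise" cases really do give zero. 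I would organize this as a sequence of lemmas, one per type of local configuration, and check the Garnir relations $J_4^\lambda$ are only needed implicitly (through the KMR basis theorem already invoked) rather than directly in these computations.
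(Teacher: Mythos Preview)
Your overall strategy---push each generator through $\psi_\sigma$, collect corrections from the quadratic and braid relations, and kill leftover terms with $J_2^\mu,J_3^\mu$---is the paper's strategy too. Two points where the paper's execution is sharper than your sketch.

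For part (ii), instead of sliding $y_r$ through the whole braid $\psi_\sigma$ and then arguing that ``at most one correction survives'', the paper first uses Proposition~\ref{prop:Shortest} to write $\sigma = h\tau$ with $\tau$ minimal of weight $\bi_\sigma$ and $h\in H(\bi_\sigma)$. A degree count shows $[\tau]$ spans the top-degree component of $e(\bi_\sigma)S^\lambda$, so $y_s[\tau]=0$ for all $s$ automatically. One then only has to commute $y_r$ through the mutually commuting generators of $h$, and since $H(\bi_\sigma)$ is elementary abelian at most one of $r-1,r$ can occur among them; this yields exactly one correction term or none. Your direct approach works in principle but the bookkeeping is uncontrolled without this reduction.

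For part (iii), the paper organises not by ``cross vs.\ don't cross'' but by the four arm/leg types at positions $r,r+1$. Cases arm/leg and leg/arm go as you describe (direct, respectively via $\psi_r^2$ and part (ii)). The arm/arm and leg/leg cases---where $s_r\sigma\notin\Sh^\lambda$ and no single relation suffices---are handled by an \emph{induction on the position}: locate the nearest leg (resp.\ arm) strand, peel off a cycle factor $s_u\cdots s_t$ from $\sigma$, apply the braid relation once to get two terms, and observe that one term is the same situation at a strictly smaller (resp.\ larger) position. Your proposal says ``careful enumeration of local patterns'' and ``sequence of lemmas'', but does not isolate this inductive mechanism; without it the enumeration you anticipate has no obvious termination.

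One minor correction: your argument for $\psi_1[\sigma]=0$ via $J_3^\lambda$ presumes $1,2$ lie in the same row of $\T^\lambda$, which fails when $d-k=1$. The paper instead observes that every $\bi_\sigma$ begins with $0$, so $s_1\bi_\sigma$ is not a weight of $S^\lambda$ and $\psi_1[\sigma]=0$ by weight.
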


\begin{proof}
  (i) Immediate from the definitions.

  (ii) We define $\Sh^\la(\bi) = \{\tau \in \Sh^\la\ |\ \wt([\tau]) = \bi\}$. By Proposition~\ref{prop:Shortest} there is a unique element $\tau \in \Sh^\la(\bi_\si)$ of minimal length, and $\si = h \tau$ for some $h \in H(\bi_\si)$. Choosing any reduced decomposition $h = s_{r_1} \dots s_{r_a}$, it follows that $[\si] = \psi_{r_1} \dots \psi_{r_a} [\si_\bi]$. Moreover $\deg([\tau])-\deg([\si]) = 2a$, and so $[\tau]$ is the unique vector of weight $\bi_\si$ with largest degree. In particular, $y_s [\tau] = 0$ for $1 \leq s \leq d$.
Since $H(\bi_\si)$ is commutative, at most one of $r-1$ and $r$ is an element of $\{r_1, \dots, r_a\}$.

    Suppose that $r \in \{r_1, \dots, r_a\}$. This is equivalent to $i_r = i_{r+1}$, $r \in \Leg(\si)$ and $r+1 \in \Arm(\si)$. Since the $\psi_{r_i}$ commute with each other, we may assume that $r = r_1$. We compute
    \begin{align*}
         y_r [\si] &= y_r \psi_r \psi_{r_2} \dots \psi_{r_a} [\si_\bi]\\
               &= (\psi_r y_{r+1} - 1) \psi_{r_2} \dots \psi_{r_a} [\si_\bi]\\
               &= \psi_r \psi_{r_2} \dots \psi_{r_a} y_{r+1} [\si_\bi] - \psi_{r_2} \dots \psi_{r_a} [\si_\bi]\\
               &= -\psi_{r_2} \dots \psi_{r_a} [\si_\bi] = -[s_r \si].
    \end{align*}

    Similarly, $r-1 \in \{r_1, \dots, r_a\}$ is equivalent to $i_{r-1} = i_r$, $r-1 \in \Leg(\si)$, and $r \in \Arm(\si)$. An argument similar to the above shows that $y_r [\si] = [s_{r-1} \si]$.
    If neither $r$ nor $r-1$ is among $\{r_1, \dots, r_a\}$, then $y_r$ commutes with each $\psi_{s_r}$, and thus $y_r \psi_{s_{r_1}} \dots \psi_{r_a} [\si_\bi] = \psi_{r_1} \dots \psi_{r_a} y_r [\tau] = 0$.

  (iii) Observe that $\wt([\si])$ begins with either $(0,1,\dots)$ or $(0,e-1,\dots)$. By weight, we must have $\psi_1 [\si] = 0$. We break down the rest of the proof into many cases.

  Case 1: Assume $r \in \Arm(\si)$ and $r+1 \in \Leg(\si)$. Clearly $\psi_r [\si] = [s_r \si]$.

  Case 2: Assume $r \in \Leg(\si)$, $r+1 \in \Arm(\si)$. Then $[\si] = \psi_r [s_r \si]$, and so $\psi_r [\si] = \psi_r^2 [s_r \si]$. If $i_r \nslash i_{r+1}$ then $\psi_r^2 [s_r \si] = [s_r \si]$, and if $i_r = i_{r+1}$ then $\psi_r^2 [s_r \si] = 0$.

    If $i_r \to i_{r+1}$, then $\psi_r^2 [s_r \si] = (y_r - y_{r+1})[s_r \si]$.
    Since $r \in \Arm(s_r \si)$, part (ii) of this theorem shows that if $y_r [s_r \si] \neq 0$ then $r-1 \in \Leg(s_r \si)$ and $i_{r-1} = i_{r+1}$. %Assume that $r-1 \in \Leg(s_r \si)$. We know that $r+1 \in \Leg(s_r \si)$ with weight $i_r$, and therefore $i_{r-1} = i_r + 1$. The condition $i_r \to i_{r+1}$ is equivalent to $i_{r+1} = i_r - 1$. Putting this all together, we have $i_{r-1} = i_{r+1} + 2$. Since we have assumed that $e \neq 2$, we cannot have $i_{r-1} = i_{r+1}$. Therefore $y_r [s_r \si] = 0$.
    This is seen to be impossible by considering weights, %(here it is important that $e \neq 2$), 
    and so $y_r [s_r \si] = 0$.
    Similarly, $r+1 \in \Leg(s_r \si)$, so part (ii) says that $y_{r+1}[s_r \si] = 0$ unless $r+2 \in \Arm(s_r \si)$ and $i_r = i_{r+2}$. %Assume that $r+2 \in \Arm(s_r \si)$, which is clearly equivalent to $r+2 \in \Arm(\si)$. We know that $r \in \Arm(s_r \si)$ with weight $i_{r+1}$, and therefore $i_{r+2} = i_{r+1} + 1$. The condition $i_r \to i_{r+1}$ is equivalent to $i_{r+1} = i_r - 1$, so we have $i_r = i_{r+2}$. Therefore $y_{r+1} [s_r \si] = -[s_{r+1} s_r \si]$ in this case.
    But $r+2 \in \Arm(s_r \si)$ if and only if $r+2 \in \Arm(\si)$, in which case $i_{r+2} = i_{r+1} + 1 = i_r$. We then conclude that $y_{r+1}[s_r \si] = -[s_{r+1} s_r \si]$. To summarize, if $i_r \to i_{r+1}$, then
      \[ \psi_r [\si] = \begin{cases}
             [s_{r+1} s_r \si],& \textup{ if $r+2 \in \Arm(\si)$}\\
         0,& \textup{ otherwise.}
      \end{cases} \]

    Now suppose $i_r \leftarrow i_{r+1}$. An argument similar to the one above shows that
      \[ \psi_r [\si] = \begin{cases}
             -[s_r s_{r+1} \si],& \textup{ if $r-1 \in \Leg(\si)$}\\
         0,& \textup{ otherwise.}
      \end{cases} \]

    Case 3: Assume $r, r+1 \in \Arm(\si)$. We prove the following statement by induction on $t$: If $\tau \in \Sh^\la$ satisfies $t, t+1 \in \Arm(\tau)$ we have 
    $$\psi_t [\tau] = \begin{cases} [s_t s_{t+1} \si], & \text{if $t-1 \in \Leg(\tau)$ and $i_{t-1} = i_t$}\\
      0, & \text{otherwise.}
    \end{cases}$$
    The base case of $t=1$ has been shown above.

    We proceed with the induction, fixing $t > 1$, $\tau \in \Sh^\la$ with $t, t+1 \in \Arm(\tau)$ and assuming the claim holds for all smaller values of $t$. Let $u$ be the largest value of $\Leg(\tau)$ which is less than $t$. If no such value exists, then $\psi_t [\tau] = \psi_t \psi_\tau z^\la = \psi_\tau \psi_t z^\la = 0$. Otherwise, there is a reduced expression for $\tau$ beginning with $s_u \dots s_{t-1} s_t$. Write $\tau' = s_t s_{t-1} \dots s_u \tau$. Then
    \begin{align*}
      \psi_t [\tau] &= \psi_t (\psi_u \dots \psi_{t-2}) \psi_{t-1} \psi_t [\tau'] = (\psi_u \dots \psi_{t-2}) \psi_t \psi_{t-1} \psi_t [\tau'] \\
        &= (\psi_u \dots \psi_{t-2})(\psi_{t-1} \psi_t \psi_{t-1} + \de_{i_t, i_u}) [\tau']\\
        &= (\psi_u \dots \psi_t) \psi_{t-1} [\tau'] + \de_{i_t, i_u} (\psi_u \dots \psi_{t-2}) [\tau'].
    \end{align*}

    Assume that $t-1 \in \Arm(\tau)$. Then the first term is zero by induction. Furthermore, the second term is zero unless $i_t = i_u$ and (by induction) $u=t-2$, $t-3 \in \Leg(\tau)$, and $i_{t-3} = i_{t-1}$. But since $i_{t-1} = i_t - 1$ and $i_{t-3} = i_u + 1$, this is impossible. Therefore the second term is zero as well.

    Now suppose that $u = t-1$. We can see that the first term is zero using an argument similar to the above. If the second term appears, it is visibly nonzero. We have thus proved the induction step.
    
    Applying this to case 3, we have
    $$\psi_r [\si] = \begin{cases}[s_r s_{r-1} \si],& \textup{ if $r-1 \in \Leg(\si)$, and $i_r = i_{r-1}$}\\
     0,& \text{otherwise.} \end{cases}$$

    Case 4: Assume $r, r+1 \in \Leg(\si)$. This situation is entirely analogous to case 3, except now induction runs backwards from $t = d-1$ to $t = 2$. We find that
    $$\psi_r [\si] = \begin{cases}-[s_{r-1} s_r \si],& \textup{ if $r+2 \in \Arm(\si)$, and $i_{r+1} = i_{r+2}$}\\
     0,& \text{otherwise.} \end{cases}$$

\end{proof}

\begin{Corollary}\label{cor:Extreme}
  Suppose that $e(\bi)S^\la \neq 0$. Then
  \[
    \{v \in e(\bi)S^\la\ |\ y_r v = 0\ \textup{ for } r = 1, \dots, d\} = \O [\si_\bi].
  \]
\end{Corollary}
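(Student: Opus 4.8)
The plan is to use the explicit basis $\{[\tau] \mid \tau \in \Sh^\la(\bi)\}$ of the weight space $e(\bi)S^\la$ together with the description of the $y_r$-action given in Theorem~\ref{thm:KLRonBasis}(ii). First I would recall, as in the proof of part (ii) of that theorem, that Proposition~\ref{prop:Shortest} provides a unique minimal-length element $\si_\bi \in \Sh^\la(\bi)$, and that every $\tau \in \Sh^\la(\bi)$ is of the form $h\si_\bi$ for a unique $h \in H(\bi_{\si_\bi}) = H(\bi)$, with $\deg([\si_\bi]) - \deg([\tau]) = 2\,\ell(h)$. Thus $[\si_\bi]$ is the unique (up to scalar) vector of top degree in the weight space, and one inclusion is already recorded in the proof of Theorem~\ref{thm:KLRonBasis}(ii): $y_s[\si_\bi] = 0$ for all $s$, so $\O[\si_\bi]$ is contained in the left-hand side.

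For the reverse inclusion, take $v = \sum_{h \in H(\bi)} c_h [h\si_\bi] \in e(\bi)S^\la$ with $y_r v = 0$ for all $r$, and I would show $c_h = 0$ for every $h \neq 1$. I would induct on $\ell(h)$ downward, or equivalently pick $h \neq 1$ of maximal length with $c_h \neq 0$ and derive a contradiction. Write $h = s_{r_1}\cdots s_{r_a}$ reduced (so the $s_{r_j}$ pairwise commute, $H(\bi)$ being an elementary abelian $2$-group by the remark before Proposition~\ref{prop:Shortest}), and consider $y_{r_1}$ applied to $v$. By Theorem~\ref{thm:KLRonBasis}(ii), the action of $y_{r_1}$ on a basis vector $[\tau]$ is, up to sign, either $0$ or $\pm[s_{r_1-1}\tau]$ or $\pm[s_{r_1}\tau]$, and the non-zero outcome occurs precisely according to the local combinatorial conditions on $\tau$. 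The key point is to track which basis vectors $[h'\si_\bi]$ can produce the term $[h\si_\bi]$ under $y_{r_1}$: since $s_{r_1}$ appears in the reduced word for $h$, applying $y_{r_1}$ to $[s_{r_1} h \si_\bi]$ (note $s_{r_1}h < h$ in length, so this is a legitimate basis vector whose coefficient is governed by the maximality hypothesis only if $s_{r_1}h$ has the same... ) — more carefully, I would identify that the coefficient of $[h\si_\bi]$ in $y_{r_1} v$ receives a contribution from $c_h$ itself (via the second case of (ii), using that $r_1$ lies in $\Leg$ and $r_1+1$ in $\Arm$ for the tableau $h\si_\bi$, since $s_{r_1}$ "undoes" a crossing) together with possibly one other term, and arrange the ordering of the induction so the other contribution is already known to vanish. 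Setting that coefficient to zero forces $c_h = 0$.

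The main obstacle I anticipate is the bookkeeping in the last step: one must verify that for $\tau = h\si_\bi$ with $s_{r_1} \le h$, the relevant clause of Theorem~\ref{thm:KLRonBasis}(ii) genuinely fires — i.e.\ that $i_{r_1} = i_{r_1+1}$, $r_1 \in \Leg(\tau)$, and $r_1+1 \in \Arm(\tau)$ — and that no \emph{other} basis vector $[h'\si_\bi]$ with $c_{h'} \neq 0$ and $h' \neq h$ maps to a nonzero multiple of $[h\si_\bi]$ under $y_{r_1}$, unless $h'$ is strictly longer (already excluded by maximality) or strictly shorter in a way controlled by a secondary induction. Because $H(\bi)$ is abelian of exponent $2$ and the $y_r$-action moves between adjacent cosets in a very constrained way, I expect these constraints to pin things down, but writing it cleanly requires care with the $\Arm/\Leg$ membership of $r_1-1, r_1, r_1+1$ for the shuffle $h\si_\bi$. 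Once $c_h = 0$ for all $h \neq 1$, we conclude $v = c_1[\si_\bi] \in \O[\si_\bi]$, completing the proof.
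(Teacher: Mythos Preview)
Your overall strategy matches the paper's: pick $h \in H(\bi)$ of maximal length with $c_h \neq 0$ and use the $y_r$-action from Theorem~\ref{thm:KLRonBasis}(ii) to force $c_h = 0$. However, your execution contains a concrete error. You claim that the coefficient of $[h\si_\bi]$ in $y_{r_1}v$ receives a contribution from $c_h$ itself, via the second case of (ii). But by Theorem~\ref{thm:KLRonBasis}(ii), $y_{r_1}[\tau]$ is one of $-[s_{r_1}\tau]$, $[s_{r_1-1}\tau]$, or $0$; it is never a nonzero multiple of $[\tau]$ itself. (The conditions you correctly cite --- $r_1 \in \Leg(h\si_\bi)$ and $r_1+1 \in \Arm(h\si_\bi)$ --- are those of the \emph{first} case, which gives $y_{r_1}[h\si_\bi] = -[s_{r_1}h\si_\bi]$, not the second.) Tracking the coefficient of $[h\si_\bi]$ in $y_{r_1}v$ therefore yields no information whatsoever.

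The repair is to track the coefficient of $[s_{r_1}h\si_\bi]$ instead. Only $[h\si_\bi]$ can contribute to it under $y_{r_1}$: the second case of (ii) would require $i_{r_1-1}=i_{r_1}$, impossible since already $i_{r_1}=i_{r_1+1}$ and no weight of $S^\la$ has three equal adjacent residues; and for the first case one needs $s_{r_1}h' = s_{r_1}h$ with $s_{r_1}$ in the reduced word of $h'$, forcing $h'=h$. Hence that coefficient equals $-c_h$, and $y_{r_1}v=0$ gives $c_h=0$. The paper packages the same idea more cleanly by applying the entire product $y_{r_1}\cdots y_{r_m}$ to $v$ at once and observing that $y_{r_1}\cdots y_{r_m}v = (-1)^m c_{h}[\si_\bi]$, which is nonzero unless $h=1$.
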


\begin{proof}
  By Theorem~\ref{thm:KLRonBasis}, we have $y_r [\si_\bi] = 0$ for $r = 1, \dots, d$.
  Suppose given
  \[
    v = \sum_{\si \in \Sh^\la}{c_\si [\si]} \in e(\bi)S^\la,
  \]
  and let $h = s_{r_1} \dots s_{r_m} \in H(\bi)$ be a reduced expression for an element of maximal length for which $c_{h\si_\bi} \neq 0$. Apply Theorem~\ref{thm:KLRonBasis} to obtain $y_{r_1} \dots y_{r_m} v = (-1)^m c_{h\si_\bi} [\si_\bi] \neq 0$. Therefore $v$ is killed by every $y_r$ if and only if $v \in \O[\si_\bi]$.
\end{proof}

\section{Homomorphisms into $S^\la$}

We again fix two integers $d > k \geq 0$,  $\la := (d-k,1^k)$ and $\al := \cont(\la)$. We also fix $\mu = (\mu_1, \dots, \mu_N)$ a partition of $d$. The goal of this section is to determine $\Hom_{R_\al}(S^\mu, S^\la)$, and the answer is given in Theorem~\ref{thm:mainHom}. In order for a nonzero homomorphism to exist $\bi^\mu$ must be a weight of $S^\la$. We assume this, so in particular we are able to define $\si_\mu := \si_{\bi^\mu}$. This determines a mapping from $\mu$ to $\la$ in which the node containing $r$ in $\T^\mu$ is sent to the node containing $r$ in $\si_\mu \T^\la$. We define \emph{arm nodes} and \emph{leg nodes} of $\mu$ to be the nodes which under this map are sent to the arm or the leg of $\la$, respectively.

\begin{Theorem}\label{thm:mainHom}
  There is a homomorphism $S^\mu \to S^\la$ satisfying $z^\mu \mapsto v$ for $v \neq 0$ if and only if
  \begin{enumerate}
    \item there exist $c \in \{1, \dots, k+1\}$, $\ba \in (\Z_{>0})^c$, and $0 \leq m < e$ such that $v \in \Ann_\O(\Gc(\ba)) [\si_\mu]$ and
$$\mu = (a_1 e, \dots, a_{c-1} e, a_c e - m, 1^{k-c+1}),$$
    \item $e$ divides $d$, there exist $c \in \{1, \dots, k\}$, $\ba \in (\Z_{>0})^c$, and $0 \leq m < e$ such that $v \in \Ann_\O(\Gc(\ba)) [\si_\mu]$ and
$$\mu = (a_1 e, \dots, a_{c-1} e, a_c e - m, 1^{k-c+2}),\ \text{or}$$
    \item $N > k+1$, there exist $\ba \in (\Z_{>0})^N$ and $0 \leq m < e$ such that $v \in \Ann_\O(\Gc(\ba)) [\si_\mu]$ and
$$\mu = (a_1 e, \dots, a_k e, a_{k+1}e - 1, \dots, a_{N-1} e - 1, a_N e - 1 - m).$$
  \end{enumerate}
\end{Theorem}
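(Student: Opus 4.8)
Recall the standing hypothesis that $\bi^\mu$ is a weight of $S^\la$ (equivalently $e(\bi^\mu)S^\la\neq0$), so that $\si_\mu$ and the arm/leg partition of the nodes of $\mu$ are defined. By the universal property of $S^\mu$ (Definition~\ref{DSpecht}), a homomorphism $\phi\colon S^\mu\to S^\la$ is the same datum as a vector $v=\phi(z^\mu)\in S^\la$ annihilated by each of the four families of generators of $J^\mu$: the elements $e(\bj)-\de_{\bj,\bi^\mu}$, the $y_r$, the $\psi_r$ with $r,r+1$ in a common row of $\T^\mu$, and the Garnir elements $g^A$. The first two families say precisely that $v\in e(\bi^\mu)S^\la$ and $y_rv=0$ for all $r$, which by Corollary~\ref{cor:Extreme} is equivalent to $v\in\O[\si_\mu]$. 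So the first step is to write $v=c[\si_\mu]$ with $c\in\O$; since $c$ is then determined by $\phi$, this exhibits $\Hom_{R_\al}(S^\mu,S^\la)$ as (isomorphic to) an ideal of $\O$, whence it is at most one-dimensional over a field and, when nonzero, is spanned by the map $z^\mu\mapsto[\si_\mu]$. What remains is to decide for which $c$ the vector $c[\si_\mu]$ is killed by the $\psi_r$ in rows of $\T^\mu$ and by the $g^A$.

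For the in-row relations I would use Proposition~\ref{prop:Shortest} to describe the arm/leg partition of the nodes of $\mu$ explicitly: reading $\T^\mu$ in order, a node is an arm node exactly when its residue continues the increasing ``arm'' sequence, so that the arm nodes (resp.\ the leg nodes) read the residues $0,1,2,\dots$ (resp.\ $e-1,e-2,\dots$) in order. Since residues increase by $1$ along every row, leg nodes within a row can be neither adjacent to one another nor strictly enclosed by arm nodes; feeding this into Theorem~\ref{thm:KLRonBasis}(iii) and requiring that its right-hand side vanish for every in-row pair $r,r+1$ forces each row of $\mu$ to be essentially either an ``arm row'' or a ``leg row'', with only a tightly controlled transition near the bottom of $\mu$. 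Working this out shows that $\mu$ must have exactly one of the three shapes listed in the statement (with $\ba$ the sequence of long-row lengths in units of $e$, $m$ the deficiency mod $e$ of the last long row, and the divisibility $e\mid d$ in case~(ii) being exactly what permits one more single-box row). Conversely, for $\mu$ of any one of these three shapes one checks directly from Theorem~\ref{thm:KLRonBasis}(iii) that $\psi_r[\si_\mu]=0$ for every in-row pair, so this step imposes no condition on the scalar $c$.

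The substance of the argument is the Garnir relations. Fix $\mu$ of one of the three shapes and a Garnir node $A$; write $k$ for the number of $A$-bricks and $f$ for the number of them in the top row of $\Belt^A$, so that $g^A(c[\si_\mu])=c\sum_{u\in\D^A}\tau^A_u\bigl(\psi^{\T^A}[\si_\mu]\bigr)$, the sum running over the $\binom{k}{f}$ minimal-length left coset representatives of $\Si_f\times\Si_{k-f}$ in $\Si^A\cong\Si_k$. I would evaluate $\psi^{\T^A}[\si_\mu]$ by iterating Theorem~\ref{thm:KLRonBasis} (for weight reasons most factors act as $0$ or as the identity, leaving $\pm$ a single standard basis vector), then apply the brick-swaps $\tau^A_u$, each a product of the $\tau^A_r=(\psi_{w^A_r}+1)e(\bi^A)$, again tracking everything on the standard basis via Theorem~\ref{thm:KLRonBasis}. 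The expectation --- this is the crux --- is that the whole sum collapses to a $\Z$-linear combination of the binomial coefficients $\binom{a_i}{j}$, $1\le j\le a_{i+1}-1$, times $[\si_\mu]$. Requiring $g^A(c[\si_\mu])=0$ simultaneously for all Garnir nodes $A$ then becomes, by the definition of $\Gc$ together with Corollary~\ref{cor:pGc}, precisely $c\,\Gc(\ba)=0$, i.e.\ $c\in\Ann_\O(\Gc(\ba))$. Reading this computation in reverse gives the ``if'' direction: for $\mu$ of the stated shape and $c\in\Ann_\O(\Gc(\ba))$, the vector $c[\si_\mu]$ is annihilated by all four families, so $z^\mu\mapsto c[\si_\mu]$ extends to a homomorphism.

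The main obstacle is this last step. The Garnir elements are genuinely intricate --- sums over coset representatives of products of brick-swaps composed with the long braid $\psi^{\T^A}$ --- and one must show that, after resolving every factor through Theorem~\ref{thm:KLRonBasis}, the resulting terms telescope exactly into the binomial coefficients defining $\Gc$, with no stray nonzero basis vectors surviving. One must also manage all Garnir nodes at once --- those lying inside the long part of $\mu$, where $\Belt^A$ meets only two long rows, versus those straddling the transition into the single-box rows --- in each of the three shapes separately, and confirm that neither $m$ nor the condition $e\mid d$ of case~(ii) contributes any constraint beyond $c\in\Ann_\O(\Gc(\ba))$. The case $k=0$ recovers James's computation \cite[Theorem~24.4]{J}; the new features here are the extra bricks and the extra Garnir nodes contributed by the leg of $\la$.
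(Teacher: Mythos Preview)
Your outline follows the paper's strategy in spirit but contains a genuine gap at the step where you determine the shape of~$\mu$. You claim that the in-row relations $\psi_r[\si_\mu]=0$ (together with the standing weight hypothesis) already force $\mu$ to have one of the three listed shapes. This is false. By Lemma~\ref{lem:rel123}, the conditions $(J_1^\mu+J_2^\mu+J_3^\mu)v=0$ are equivalent only to $v\in\O[\si_\mu]$ \emph{and} all leg nodes lying in the first column of~$\mu$; they do not force the leg nodes to occupy the contiguous block $(2,1),\dots,(k+1,1)$. A concrete counterexample: take $e=3$, $k=1$, $d=9$, $\la=(8,1)$, $\mu=(2,2,2,2,1)$. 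Then $\bi^\mu=(0,1,2,0,1,2,0,1,2)=\bi^\la$, so $\si_\mu=1$ and $[\si_\mu]=z^\la$, for which every in-row $\psi_r$ vanishes; yet $\mu$ is of none of the three shapes. What actually rules out such $\mu$ is a \emph{Garnir} relation: this is exactly the content of Proposition~\ref{prop:legNodes}, whose proof applies $g^A$ for the first-column node $A=(a-1,1)$ to show that if $(a,1)$ is a leg node with $a\ge3$ then so is $(a-1,1)$. Only after the leg nodes are pinned to $(2,1),\dots,(k+1,1)$ does the weight comparison of Lemma~\ref{lem:muForms} yield the three shapes. So you must move part of the Garnir analysis earlier, before you can assume the shape of~$\mu$.

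Two smaller points. First, your expectation that $g^A[\si_\mu]$ equals a $\Z$-combination of binomials times $[\si_\mu]$ is not what happens: Lemmas~\ref{lem:GarnirPsi} and~\ref{lem:GarnirRel} show that for each Garnir node $A$ one gets a \emph{single} binomial coefficient $\binom{a_x}{f}$ times a single standard basis vector, which for $x\le k$ is \emph{not} $[\si_\mu]$ but rather $[(t+1,\dots,s)\si^\la_\mu]$ or $[(r,\dots,s)\si^\la_\mu]$. The conclusion $c\,\Gc(\ba)=0$ is the same, since these vectors are nonzero basis elements, but your description of the mechanism is off. Second, Corollary~\ref{cor:pGc} is irrelevant here; the passage from ``$c\binom{a_x}{f}=0$ for all $x,f$'' to ``$c\,\Gc(\ba)=0$'' uses only that $\Gc(\ba)$ is by definition the $\gcd$ of those binomials.
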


The proof of this will follow from several lemmas.
Recall the ideals $J_i^\mu$ of Definition~\ref{DSpecht}.

\begin{Lemma}\label{lem:rel123}
  A nonzero $v \in S^\la$ satisfies $(J_1^\mu + J_2^\mu + J_3^\mu)v = 0$ if and only if
$v \in \O [\si_\mu]$, and
all leg nodes of $\mu$ are in the first column of its Young diagram.
\end{Lemma}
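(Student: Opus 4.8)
The plan is to unwind Definition~\ref{DSpecht} termwise and translate each generator's vanishing into a combinatorial condition on the map $\mu \to \la$ induced by $\si_\mu$. First I would observe that $J_1^\mu$ annihilates $v$ precisely when $e(\bj)v = 0$ for all $\bj \neq \bi^\mu$, i.e. when $v \in e(\bi^\mu)S^\la$, which makes sense only because we have assumed $\bi^\mu$ is a weight of $S^\la$; this places $v$ in the weight space spanned by those $[\si]$ with $\bi_\si = \bi^\mu$. Next, $J_2^\mu$ annihilates $v$ iff $y_r v = 0$ for all $r$, so Corollary~\ref{cor:Extreme} immediately forces $v \in \O[\si_\mu]$. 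The content of the lemma is therefore the claim that, granting $v = c[\si_\mu]$, the remaining condition $J_3^\mu v = 0$ — i.e. $\psi_r[\si_\mu] = 0$ whenever $r, r+1$ lie in the same row of $\T^\mu$ — holds if and only if every leg node of $\mu$ sits in the first column.

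The bulk of the work is the $J_3^\mu$ analysis, and I would carry it out using Theorem~\ref{thm:KLRonBasis}(iii) applied to $\si = \si_\mu$. Since $r, r+1$ being in the same row of $\T^\mu$ means consecutive arm/leg membership is governed by whether both, one, or neither of $r, r+1$ are arm nodes of $\mu$: if both are arm nodes then $r, r+1 \in \Arm(\si_\mu)$, and Theorem~\ref{thm:KLRonBasis}(iii) (case ``$r, r+1 \in \Arm(\si)$'') gives $\psi_r[\si_\mu] \in \{[s_r s_{r-1}\si_\mu], 0\}$, nonzero exactly when $r-1 \in \Leg(\si_\mu)$ and $i_{r-1} = i_r$; one then checks via residues along a row of $\T^\mu$ that this exceptional case cannot occur for $\si_\mu$ minimal (or must be excluded), so $\psi_r[\si_\mu] = 0$ automatically. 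The real constraint arises when $r$ and $r+1$ are in the same row of $\mu$ but one of them is a leg node: since leg nodes of $\mu$ are, by definition, those mapped into the single leg column of $\la$, having two same-row nodes of $\mu$ both map into that column is impossible unless they are the same node; so the only way a same-row pair $r, r+1$ can involve a leg node is $r \in \Arm$, $r+1 \in \Leg$ (or the reverse), and the first case of Theorem~\ref{thm:KLRonBasis}(iii) then gives $\psi_r[\si_\mu] = [s_r\si_\mu] \neq 0$. Thus $J_3^\mu[\si_\mu] = 0$ forces: no same-row pair of $\mu$ straddles arm and leg, which — since the leg of $\la$ is a single column — is equivalent to all leg nodes of $\mu$ lying in its first column.

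For the converse direction I would argue that if all leg nodes of $\mu$ are in the first column, then for any same-row consecutive pair $r, r+1$ in $\T^\mu$, at most one can be a leg node, and if one is then it is the first-column entry $r$ with $r+1$ an arm node — but in $\T^\mu$ the first-column entries are exactly the row-initial entries, and the successor $r+1$ of a row-initial entry in row $a > 1$ is $\T^\mu(a,2)$, which lies in the arm only if... here I need to be slightly careful: I would instead note that $r, r+1$ in the same row of $\T^\mu$ with $r$ in column $1$ happens only when $\mu_a \geq 2$, and then use that, since row $a$ has length $\mu_a$ and all of columns $2, \dots, \mu_a$ of that row are arm nodes by hypothesis (only column $1$ can be a leg node), we get $r, r+1 \in \Arm(\si_\mu)$ whenever $r$ is not row-initial, while if $r$ is row-initial and a leg node then $r+1 \in \Arm$, landing us in the ``$r \in \Leg, r+1 \in \Arm$'' situation — but this is \emph{not} one of the cases producing a nonzero $\psi_r$ in part (iii) unless further residue conditions hold, which I would rule out by weights exactly as in the proof of that theorem. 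Assembling these, $\psi_r[\si_\mu] = 0$ for every same-row pair, so $J_3^\mu[\si_\mu] = 0$, and combined with the first two paragraphs this gives $(J_1^\mu + J_2^\mu + J_3^\mu)[\si_\mu] = 0$, hence the same for any scalar multiple.

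The main obstacle I anticipate is the careful residue/weight bookkeeping in the case where a same-row pair $r, r+1$ has $r-1 \in \Leg(\si_\mu)$ (the exceptional subcase of Theorem~\ref{thm:KLRonBasis}(iii)): one must show that for $\si_\mu$, the minimal-length shuffle, the incriminating equalities of residues force a contradiction with the shape of the increasing/decreasing segments $S^+, S^-$ underlying $\bi^\mu$, so that no spurious nonzero terms appear. This is the same ``impossible by considering weights'' move used repeatedly in the proof of Theorem~\ref{thm:KLRonBasis}, so I expect it to go through, but it is the step requiring genuine attention rather than routine unwinding.
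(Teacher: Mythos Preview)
Your approach is essentially the paper's: reduce to $v\in\O[\si_\mu]$ via Corollary~\ref{cor:Extreme}, then analyze $J_3^\mu$ using the case table of Theorem~\ref{thm:KLRonBasis}(iii). Two justifications need tightening, however. First, your claim that two same-row nodes of $\mu$ cannot both be leg nodes ``because the leg of $\la$ is a single column'' is not a valid argument---the leg has $k$ distinct nodes and nothing a priori prevents two nodes of $\mu$ from mapping to two of them; the correct reason (which the paper uses) is residues: same-row consecutive entries in $\T^\mu$ have $i_{r+1}=i_r+1$, whereas consecutive leg positions in a shuffle have $i_{r+1}=i_r-1$, and these disagree since $e\ge 3$. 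Second, in the converse when $r$ is a row-initial leg node and $r+1$ is an arm node, you land squarely in the third nonzero case of Theorem~\ref{thm:KLRonBasis}(iii) (since $i_r\leftarrow i_{r+1}$ automatically), and the obstruction is $r-1\in\Leg(\si_\mu)$; this is \emph{not} ruled out ``by weights'' but by a partition-shape argument: if $r-1$ is a leg node it lies in column~$1$ by hypothesis, forcing $\mu_{a-1}=1$ and hence $\mu_a\le 1$, contradicting that $r,r+1$ share row~$a$. The analogous arm--arm subcase ($r-1\in\Leg$, $i_{r-1}=i_r$) is dispatched the same way. With these two fixes your outline becomes the paper's proof.
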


\begin{proof}
Assume that $(J_1^\mu + J_2^\mu + J_3^\mu)v = 0$. 
Corollary~\ref{cor:Extreme} implies that $v=\ga [\si_\mu]$ for some $\ga \in \O$. 
By considering weights we see that $(1,2)$ cannot be a leg node. Suppose that $(a, b) \neq (1,2)$ is a leg node of $\mu$ with $b \geq 2$, and define $r = \T^\mu(a,b)$. Then since $J_3^\mu v = 0$ we must have $\psi_{r-1} v = 0$.
Furthermore $(a, b-1)$ must be an arm node, for in $\bi^\mu=(i_1, \dots, i_d)$ we have $i_r = i_{r-1} + 1$, whereas if they were both leg nodes we would have $i_r = i_{r-1} - 1$. By Theorem~\ref{thm:KLRonBasis} we have $\psi_{r-1} \ga [\si_{\mu}] = \ga [s_{r-1} \si_{\mu}] \neq 0$, a contradiction.

Conversely, assume that $v=\ga [\si_\mu]$ and all leg nodes of $\mu$ are in the first column. Suppose that $r$ and $r+1$ are in the same row of $\T^\mu$, and so in particular $r+1$ is an arm node. If the node of $\mu$ containing $r$ in $\T^\mu$ is a leg node %and $r\rightarrow_{\T^\mu} r+1$
then by considering Theorem~\ref{thm:KLRonBasis} we see that $\psi_r v = 0$ unless the node containing $r-1$ is also a leg node. But then the nodes containing $r-1$ and $r$ are both in the first column of $\T^\mu$ forcing $r+1$ (and every subsequent node) to also be in the first column. This contradicts the assumption that $r$ and $r+1$ appear in the same row of $T^\mu$, and so necessarily $\psi_r v = 0$.
  
  If the node of $\mu$ containing $r$ in $\T^\mu$ is an arm node, then write $\bi^\mu = (i_1, \dots, i_d)$. Referring again to Theorem~\ref{thm:KLRonBasis} we see that $\psi_r v = 0$ unless the node containing $r-1$ is a leg node, and furthermore $i_{r-1} = i_r$. But if the the node containing $r-1$ is a leg node, then by assumption it is in the first column. Once again the assumption that $r$ and $r+1$ are in the same row of $T^\mu$ forces the node containing $r-1$ to be directly to the left of the node containing $r$. But in this case $i_{r-1} = i_r - 1$ and once again we see that $\psi_r v = 0$.
\end{proof}

\begin{Proposition}\label{prop:legNodes}
  If there is a nonzero element $v \in S^\la$ with $J^\mu v = 0$, then the leg nodes of $\mu$ are precisely $(2,1), (3,1), \dots, (k + 1,1)$.
\end{Proposition}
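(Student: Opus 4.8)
By Lemma~\ref{lem:rel123}, once $J^\mu v = 0$ we already know $v = \gamma[\si_\mu]$ with $\gamma \neq 0$ and that every leg node of $\mu$ lies in the first column of the Young diagram of $\mu$. So the leg nodes are some collection of nodes $(a_1,1), (a_2,1), \dots, (a_k,1)$ with $a_1 < a_2 < \dots < a_k$; the claim is exactly that $(a_1,\dots,a_k) = (2,3,\dots,k+1)$, i.e. the leg nodes are the top $k$ nodes of the first column after $(1,1)$. The strategy is to rule out any other configuration by showing that some Garnir relation $g^A v = 0$ fails, so I would exploit the one remaining ideal $J_4^\mu$ that Lemma~\ref{lem:rel123} did not use.

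**First steps.** First I would record what $\si_\mu$ and $[\si_\mu]$ look like combinatorially: the arm strands of $\si_\mu$ carry the nodes of $\mu$ not in the first column (read off in the order dictated by Proposition~\ref{prop:Shortest}), and the leg strands carry the first-column nodes below $(1,1)$. Since all leg nodes are in column $1$ of $\mu$, I would observe that the residues along the leg of $\mu$ form a decreasing segment $S^-(\cdot, \cdot)$ and the residues along the first row an increasing segment, so that $\bi^\mu$ is genuinely a shuffle of the type studied in Section~\ref{ss:shuf}; this is what makes $\si_\mu = \si_{\bi^\mu}$ well-defined and lets me apply Theorem~\ref{thm:KLRonBasis} freely.

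**The main step.** The heart is to suppose, for contradiction, that the leg nodes are \emph{not} $(2,1),\dots,(k+1,1)$, i.e.\ there is a gap: some $(a,1)$ with $1 < a \le k+1$ is an \emph{arm} node while $(a+1,1)$ (or a later first-column node) is a leg node. Equivalently, there is a row $x$ of $\mu$ with $\mu_x \ge 2$ that lies strictly between two rows occupied in the first column by a leg node, or more precisely a Garnir node $A=(x,y)$ of $\mu$ such that the row below $A$ contains a leg node. I would then choose such a Garnir node $A$ — concretely, take $A$ to be the node $(x,1)$ or $(x,2)$ straddling the ``gap'' — and analyze $g^A v$. Using the explicit description of $g^A = \sum_{u\in\D^A}\tau^A_u\,\psi^{\T^A}$ together with Theorem~\ref{thm:KLRonBasis} (which tells us precisely how each $\psi_r$ and hence each $\psi^{\T^A}$ and each $\tau^A_u = (\si^A_u+1)e(\bi^A)$ moves $[\si_\mu]$ around the standard basis), I would compute $g^A v$ in the basis $\{[\si] : \si \in \Sh^\la\}$ and show the coefficient of some particular basis vector is $\gamma$ times a nonzero scalar — contradicting $g^A v = 0$. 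The point is that a leg node sitting below a length-$\ge 2$ row of $\mu$ forces, via the braid $\psi^{\T^A}$, a crossing pattern that Theorem~\ref{thm:KLRonBasis} evaluates to a genuinely new standard tableau rather than to $0$.

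**Where the difficulty lies.** The main obstacle is the bookkeeping in that last step: correctly identifying which Garnir node $A$ to use, writing down $\bi^A$ and $\psi^{\T^A}$ explicitly, and then tracking the action of the (many) terms $\tau^A_u$ on $[\si_\mu]$ through the case analysis of Theorem~\ref{thm:KLRonBasis} without losing a sign or a term. A clean way to organize this is to first reduce to the ``minimal offending'' situation — the smallest $k$ and the smallest such $A$ — so that $\D^A$ and $\Si^A$ are as small as possible (ideally $\Si^A$ trivial or of rank $1$, so $g^A$ is just $\psi^{\T^A}$ or $(\psi^{\T^A} + \text{one more term})$), handle that base configuration by a direct computation, and then argue that the general offending configuration contains this one and reduces to it. I expect that in the minimal case the relevant $A$-belt has at most one brick, so $g^A = \psi^{\T^A}$, and the computation collapses to a single application of the $\psi_r[\si]=[s_r\si]$ clause of Theorem~\ref{thm:KLRonBasis}(iii) (the ``$r\in\Arm$, $r+1\in\Leg$'' case), yielding $g^A v = \gamma[\si']\ne 0$.
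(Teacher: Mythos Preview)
Your approach is essentially the paper's: invoke Lemma~\ref{lem:rel123} to put all leg nodes in the first column, then use a Garnir relation at the ``gap'' to derive a contradiction via Theorem~\ref{thm:KLRonBasis}(iii). The paper takes the Garnir node $A=(a-1,1)$ for a leg node $(a,1)$ with $a\ge 3$ and $(a-1,1)$ an arm node, and observes that $g^A[\si_\mu]=[(r,r+1,\dots,s)\si_\mu]\ne 0$ by repeated use of the ``$r\in\Arm$, $r+1\in\Leg$'' clause, exactly as you anticipate.

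The one place where you overcomplicate is your worry about the ``many terms $\tau^A_u$'' and your proposed reduction to a minimal configuration to make $\Si^A$ small. This is unnecessary. For $A=(a-1,1)$ the bottom row of the Garnir belt $\Belt^A$ contains only the single node $(a,1)$, so \emph{every} $A$-brick lies in row $a-1$. Hence $f$ equals the total number $k$ of bricks, so $\Si_f\times\Si_{k-f}=\Si^A$ and $\D^A=\{1\}$ regardless of how large $\mu_{a-1}$ is or how many bricks there are. Thus $g^A=\psi^{\T^A}$ on the nose, with no sum to track and no minimal-case reduction needed. (Your remark that ``the relevant $A$-belt has at most one brick'' is not quite right, but the conclusion $g^A=\psi^{\T^A}$ still holds for the reason above.)
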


\begin{proof}
  Let $v = \ga [\si_{\mu}]$ be such an element. By Lemma~\ref{lem:rel123}, the leg nodes appear in the first column of $\mu$. Suppose that $(a, 1)$ is a leg node. Then $a \neq 1$, and so we may consider the Garnir node $A = (a-1, 1)$. Define $r = \T^\mu(a-1,1)$ and $s = \T^\mu(a,1)$. It is clear that $g^A = \psi^{T^A} = \psi_{(r, r+1, \dots, s)}$ where $(r, r+1, \dots, s)$ is a cycle of the entire Garnir belt. In particular, if $a \geq 3$ and $(a-1,1)$ is not a leg node, then $g^A [\si_{\mu}] = [(r, r+1, \dots, s)\si_{\mu}]$.
But this is a nonzero basis element by Lemma~\ref{lem:rel123}, contradicting $J^\mu_4 v = 0$. This contradiction shows that for every leg node $(a,1)$ with $a \geq 3$, the node $(a-1,1)$ is also a leg node, which is equivalent to the statement of the proposition.
\end{proof}

%One can show that $\mu = (\mu_1, \dots, \mu_N) \ledom \la$ if and only if $N \geq k+1$. Therefore, Proposition~\ref{prop:legNodes} implies that $\Hom(S^\mu, S^\la) \neq 0$ only if $\mu \ledom \la$. This is similar to a general result for cellular algebras, see \cite[Proposition 2.6 (i)]{GL}.

As a consequence of this proposition, we see that the leg nodes of $\mu$ are mapped to exactly the same nodes of $\la$, and therefore that $\si_{\mu} \T^\la$ is the tableau obtained by moving all of the arm nodes of $\T^\mu$ to the first row.
\iffalse{This is illustrated in the following example, where $\mu = (7, 5, 3, 3, 1)$ and $\la = (15, 1^{4})$. Any homomorphism $S^\mu \to S^\la$ must send $z^{(7,5,3,3,1)}$ to a multiple of the $\la$-tableau as depicted below.
\[
\begin{tikzpicture}[scale=0.5,draw/.append style={thick,black},baseline=1mm]
  \fill[red!50](0.5,-0.5)rectangle(1.5,-4.5);
  \newcount\col
  \foreach\Row/\row in {{1,...,7}/0,{8,...,12}/-1,{13,14,15}/-2,{16,17,18}/-3,{19}/-4} {
     \col=1
     \foreach\k in \Row {
        \draw(\the\col,\row)+(-.5,-.5)rectangle++(.5,.5);
        \draw(\the\col,\row)node{\k};
        \global\advance\col by 1
      }
   }
  \draw[|->](7,-2)--(10,-2)node[right]{$\ga$};
  \newcount\col
  \foreach\Row/\row in {{1,...,7,9,10,11,12,14,15,17,18}/0,{8}/-1,{13}/-2,{16}/-3,{19}/-4} {
     \col=1
     \foreach\k in \Row {
        \draw(11+\the\col,\row)+(-.5,-.5)rectangle++(.5,.5);
        \draw(11+\the\col,\row)node{\k};
        \global\advance\col by 1
      }
   }
\end{tikzpicture}
\]
}\fi
This suggests the following definition.

\begin{Definition}
  Let $\mu = (\mu_1, \dots, \mu_N)$ be a partition of $d$ with $N \geq k+1$. Define $\T^\la_\mu$ to be the $\la$-tableau obtained as follows. Let $S = \{\T^\mu(A)\ |\ A \in \mu \setminus \la\}$. We fill in the rows of $\la$ from left to right and from top to bottom using the following procedure. Let $A \in \la$ and suppose we have filled in all of the nodes to the left of $A$ or in a previous row. If $A \in \mu$ then define $\T^\la_\mu(A) = \T^\mu(A)$. Otherwise define $\T^\la_\mu(A)$ to be the smallest element of $S$, and then delete this value from $S$. Repeat this process until all nodes have been filled. %Properties of the dominance order ensure that $\T^\la_\mu$ is standard.
  Furthermore, define $\si^\la_\mu := w^{\T^\la_\mu} \in \Si_n$.
\end{Definition}

%\begin{Corollary}
%  Given any partition $\mu$ of $n$, if there is a homomorphism $S^\mu \to S^\la$, then it is non-negatively graded.
%\end{Corollary}
%\begin{proof}
%  We prove this by comparing $\deg(\T^\la)$ with $\deg(\T^\la_\mu)$, using induction on the number of rows in $\mu$. ETC.
%\end{proof}

If we have a nonzero homomorphism $S^\mu \to S^\la$, then Corollary~\ref{cor:Extreme} tells us it must send $z^\mu$ to a multiple of $[\si_{\mu}]$. On the other hand, Proposition~\ref{prop:legNodes} requires the image of $z^\mu$ to be a multiple of $[\si^\la_\mu]$. Since $[\si_{\mu}]$ and $[\si^\la_\mu]$ are elements of an $\O$-basis of $S^\la$, a necessary condition for a homomorphism is that $[\si_{\mu}] = [\si^\la_\mu]$. The next two results tell us when this happens.

\begin{Lemma}\label{lem:muForms}
  Let $\mu = (\mu_1, \dots, \mu_N)$ be a partition of $d$ such that $N \geq k+1$. Then $[\si^\la_\mu]$ has weight $\bi^\mu$ if and only if $\mu$ is of one of the following three forms.% (the nodes $(2,1), \dots, (k+1,1)$ are colored red in each of the diagrams).
  \begin{enumerate}
    \item $N = k+1$, and there exist $c \in \{1, \dots, k+1\}$, $a_1, \dots, a_c \in \Z_{>0}$ and $0 \leq m < e$ such that $\mu = (a_1 e, \dots, a_{c-1} e, a_c e - m, 1^{k-c+1})$. 
\iffalse
\[
\begin{tikzpicture}[scale=0.5,draw/.append style={thick,black},baseline=1mm]
  \fill[red!50](0.5,-0.5)rectangle(1.5,-8.5);
  \newcount\col
  \foreach\Row/\row in {{1,...,16}/0,{1,...,12}/-1,{1,...,12}/-2,{1,...,8}/-3,{1,2,3}/-4,{1}/-5,{1}/-6,{1}/-7,{1}/-8} {
     \col=1
     \foreach\k in \Row {
        \draw(\the\col,\row)+(-.5,-.5)rectangle++(.5,.5);
%        \draw(\the\col,\row)node{\k};
        \global\advance\col by 1
      }
   }
   \draw(0,0)node[left]{$a_1 e$};
   \draw(0,-1)node[left]{$a_2 e$};
   \draw(0,-2)node[left]{$\dots$};
   \draw(0,-3)node[left]{$a_{c-1} e$};
   \draw(0,-4)node[left]{$a_c e - m$};
   \draw(0,-5)node[left]{$1$};
   \draw(0,-6)node[left]{$1$};
   \draw(0,-7)node[left]{$\dots$};
   \draw(0,-8)node[left]{$1$};
\end{tikzpicture}
\]
\fi

    \item $N > k+1$ and $\mu_{k+1}=1$, and there exist $c \in \{1, \dots, k\}$, $a_1, \dots, a_c \in \Z_{>0}$ and $0 \leq m < e$ satisfying $m+c \equiv k+2$ (mod $e$) such that $\mu = (a_1 e, \dots, a_{c-1} e, a_c e - m, 1^{k-c+1}, 1)$.
\iffalse
\[
\begin{tikzpicture}[scale=0.5,draw/.append style={thick,black},baseline=1mm]
  \fill[red!50](0.5,-0.5)rectangle(1.5,-8.5);
  \newcount\col
  \foreach\Row/\row in {{1,...,16}/0,{1,...,12}/-1,{1,...,12}/-2,{1,...,8}/-3,{1,2,3}/-4,{1}/-5,{1}/-6,{1}/-7,{1}/-8,{1}/-9} {
     \col=1
     \foreach\k in \Row {
        \draw(\the\col,\row)+(-.5,-.5)rectangle++(.5,.5);
%        \draw(\the\col,\row)node{\k};
        \global\advance\col by 1
      }
   }
   \draw(0,0)node[left]{$a_1 e$};
   \draw(0,-1)node[left]{$a_2 e$};
   \draw(0,-2)node[left]{$\dots$};
   \draw(0,-3)node[left]{$a_{c-1} e$};
   \draw(0,-4)node[left]{$a_c e - m$};
   \draw(0,-5)node[left]{$1$};
   \draw(0,-6)node[left]{$1$};
   \draw(0,-7)node[left]{$\dots$};
   \draw(0,-8)node[left]{$1$};
   \draw(0,-9)node[left]{$1$};
\end{tikzpicture}
\]
\fi
    \item $N > k+1$ and $\mu_{k+1} > 1$, and there exist $a_1, \dots, a_N \in \Z_{>0}$ and $0 \leq m < e$ such that  $\mu = (a_1 e, \dots, a_k e, a_{k+1}e - 1, \dots, a_{N-1} e - 1, a_N e - 1 - m)$.
\iffalse
\[
\begin{tikzpicture}[scale=0.5,draw/.append style={thick,black},baseline=1mm]
  \fill[red!50](0.5,-0.5)rectangle(1.5,-8.5);
  \newcount\col
  \foreach\Row/\row in {{1,...,16}/0,{1,...,16}/-1,{1,...,16}/-2,{1,...,12}/-3,{1,...,12}/-4,{1,...,12}/-5,{1,...,8}/-6,{1,...,8}/-7,{1,...,7}/-8,{1,...,7}/-9,{1,...,7}/-10,{1,2,3}/-11,{1,2,3}/-12,{1,2}/-13} {
     \col=1
     \foreach\k in \Row {
        \draw(\the\col,\row)+(-.5,-.5)rectangle++(.5,.5);
%        \draw(\the\col,\row)node{\k};
        \global\advance\col by 1
      }
   }
   \draw(0,0)node[left]{$a_1 e$};
   \draw(0,-1)node[left]{$a_2 e$};
   \draw(0,-2)node[left]{$a_3 e$};
   \draw(0,-3)node[left]{$\dots$};
   \draw(0,-4)node[left]{$\dots$};
   \draw(0,-5)node[left]{$\dots$};
   \draw(0,-6)node[left]{$a_{k-1} e$};
   \draw(0,-7)node[left]{$a_k e$};
   \draw(0,-8)node[left]{$a_{k+1} e - 1$};
   \draw(0,-9)node[left]{$a_{k+2} e - 1$};
   \draw(0,-10)node[left]{$\dots$};
   \draw(0,-11)node[left]{$\dots$};
   \draw(0,-12)node[left]{$a_{N-1} e - 1$};
   \draw(0,-13)node[left]{$a_N e - m$};
\end{tikzpicture}
\]
\fi
  \end{enumerate}
\end{Lemma}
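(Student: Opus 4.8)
\emph{Proof strategy.}
Since $\si^\la_\mu=w^{\T^\la_\mu}$, the element $[\si^\la_\mu]$ has weight $\bi_{\si^\la_\mu}=\si^\la_\mu\bi^\la=\bi(\T^\la_\mu)$ (and $\T^\la_\mu$ is indeed standard, since its first column agrees with that of $\T^\mu$ and its first row is $1,\dots,\mu_1$ followed by an increasing list of values exceeding $\mu_1$), so the lemma is the assertion that $\bi(\T^\la_\mu)=\bi(\T^\mu)$ precisely when $\mu$ has one of the three stated shapes. The plan is to first reduce this to a purely combinatorial comparison and then solve the resulting congruences.

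For the reduction: $\T^\la_\mu$ and $\T^\mu$ agree outside the set $S=\{\T^\mu(A)\mid A\in\mu\setminus\la\}$, because $N\geq k+1$ forces $\mu_1\leq d-k$, so $\mu\cap\la$ is row~$1$ of $\mu$ together with the top $k+1$ nodes of its first column, and $\T^\la_\mu=\T^\mu$ on $\mu\cap\la$ by construction. Hence $\bi(\T^\la_\mu)=\bi(\T^\mu)$ if and only if the residue subsequences recording the positions of the $|S|=(d-k)-\mu_1$ entries lying in $S$ agree in the two tableaux. In $\T^\la_\mu$ those entries, in increasing order, sit in the nodes $(1,\mu_1+1),\dots,(1,d-k)$, so the subsequence is the reduction modulo~$e$ of the block $\mu_1,\mu_1+1,\dots,d-k-1$. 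In $\T^\mu$, filled along successive rows, those entries in increasing order are the concatenation over $a=2,\dots,N$ of the $S$-entries of row~$a$; as the $S$-columns of row~$a$ are $\{2,\dots,\mu_a\}$ for $a\leq k+1$ and $\{1,\dots,\mu_a\}$ for $a\geq k+2$, row~$a$ contributes the residue run from $2-a$ (resp.\ $1-a$) up to $\mu_a-a$, modulo~$e$. Since a tuple in $(\Z/e\Z)^t$ equals the reduction of $x,x+1,\dots,x+t-1$ exactly when it begins at $x$ with all consecutive differences equal to $1$, and the total length matches automatically, the equality $\bi(\T^\la_\mu)=\bi(\T^\mu)$ becomes: \textup{(C1)} the first non-empty row-run begins at residue $\mu_1$, and \textup{(C2)} at each seam between consecutive non-empty row-runs the residue steps up by $1$. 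A short computation will turn (C2) into: $e\mid\mu_a$ if the run of row~$a$ is directly followed by that of row $a+1\leq k+1$; $\mu_a\equiv-1\pmod e$ if row~$a$ (then necessarily $a\geq k+1$) is directly followed by row $a+1$; and $\mu_a\equiv a-k-2\pmod e$ in the lone ``gap'' case, where row~$a$ with $a\leq k$ is followed, across some length-$1$ rows, by row $k+2$; and (C1) becomes $e\mid\mu_1$ when $\mu_2\geq2$, $k\geq1$, and $\mu_1\equiv-(k+1)\pmod e$ when $\mu_2=1$.

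Then I would solve these congruences along the same trichotomy that defines the three forms. If $N=k+1$ only the runs of rows $2,\dots,k+1$ occur, (C1)--(C2) force $e\mid\mu_1,\dots,\mu_{c-1}$ for $c$ the last row of length $\geq2$, leave $\mu_c$ free, and make the rows past $c$ have length~$1$ --- which is form~(i) (when $\mu_2=1$ this degenerates to $\mu=\la$, form~(i) with $c=1$). If $N>k+1$, the crucial point is that a length-$1$ row in position $\geq k+2$ other than the last row would be followed by another such row, forcing $1\equiv-1\pmod e$, impossible since $e\geq3$; so at most one trailing length-$1$ row in position $\geq k+2$ is allowed. When $\mu_{k+1}=1$ this pins $N$ to $k+2$, and (C1)--(C2) then produce form~(ii), the congruence $m+c\equiv k+2\pmod e$ being exactly the gap condition $\mu_c\equiv c-k-2$ (or (C1) when $c=1$). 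When $\mu_{k+1}>1$ the same bound leaves at most one trailing part equal to $1$, so $\mu_{k+1},\dots,\mu_{N-1}\equiv-1\pmod e$, the first $k$ parts are multiples of $e$ and $\mu_N$ is free, giving form~(iii) after writing $\mu_N=a_N e-1-m$. Each converse is this computation run backwards: for $\mu$ in a given shape one checks (C1)--(C2) from the explicit residues, after observing that a shape with $\mu_c=1$ may be rewritten with a smaller $c$, so that $\mu_c\geq2$ may be assumed.

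The hard part will be purely organizational. None of the individual congruence computations is more than a line, but one has to carry the (C1)--(C2) bookkeeping cleanly through the three cases and their degenerations ($\mu=\la$, $\mu_2=1$, $k\in\{0,1\}$, $\mu_c=1$), and in particular isolate the observation that $e\geq3$ permits at most one short trailing row, which is what collapses $N$ to $k+2$ in case~(ii).
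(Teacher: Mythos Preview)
Your approach is correct and essentially the same as the paper's. Both arguments reduce the weight equality $\bi(\T^\la_\mu)=\bi^\mu$ to the condition that a certain subsequence of $\bi^\mu$ is a run of consecutive residues, and then solve the resulting seam congruences case by case along the trichotomy $N=k+1$, $N>k+1$ with $\mu_{k+1}=1$, and $N>k+1$ with $\mu_{k+1}>1$. The only notable difference is cosmetic: the paper compares residues on the full set of ``arm values'' $\{1,\dots,d\}\setminus\{\T^\mu(2,1),\dots,\T^\mu(k+1,1)\}$, whereas you restrict to the smaller set $\{\T^\mu(A)\mid A\in\mu\setminus\la\}$, having first observed that $\T^\la_\mu$ and $\T^\mu$ agree node-for-node on $\mu\cap\la$. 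This trims the first $\mu_1$ terms (which always match) and lets you phrase the remaining check cleanly as (C1)/(C2); the paper instead tracks the index $x_0=\max\{x\le k:\mu_x\ge 2\}$ and works through the same seams directly. Your key observation that $e\ge 3$ forbids two consecutive length-$1$ rows in position $\ge k+2$ is exactly how the paper forces $N=k+2$ in case~(ii), and your gap congruence $\mu_c\equiv c-k-2\pmod e$ rewrites to $m+c\equiv k+2\pmod e$ as in the lemma.
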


\begin{proof}

  Define $(i_1, \dots, i_d) := \wt([\si^\la_\mu])$ and $(j_1, \dots, j_d) := \bi^\mu$. Define $r_x = \T^\mu(x+1,1)$ for $x = 1, 2, \dots, k$. Since $r_x = \T^\mu(x+1,1) = \T^\la_\mu(x+1,1)$, it is immediate that $i_{r_x} = j_{r_x}$ for $x = 1, \dots, k$. Letting $S = \{1, 2, \dots, d\} \setminus \{r_1, \dots, r_k\}$, we therefore have that $\wt([\si^\la_\mu]) = \bi^\mu$ if and only if $i_s = j_s$ for all $s \in S$.
  
  For $p = 1, 2, \dots, n-k$, let $s_p$ be the $p^{th}$ element of the set $S$ (under the usual ordering). It is clear that $s_p = \T^\la_\mu(1,p)$. In particular, we see that $i_{s_{p+1}} = i_{s_p} + 1$ for $p = 1, \dots, d-k-1$. Since $j_{s_1} = i_{s_1}$, we have $\wt([\si^\la_\mu]) = \bi^\mu$ if and only if $j_{s_{p+1}} = j_{s_p} + 1$ for $p = 1, \dots, d-k-1$.

  Set $r_0 = 0$. Note that the condition $r_1 \neq r_0 + 1$ is automatically satisfied, so $x_0 = \max \{x\ |\ r_x \neq r_{x-1} + 1 \textup{ where } 1 \leq x \leq k\}$ is well defined. We divide $S$ as follows
\begin{align*}
  (s_1, s_2, \dots, s_{n-k}) = (1, & 2, \dots, r_1-1, \\
                                   & r_1+1, \dots, r_2-1, \\
                                   & \dots, \\
                                   & r_{x_0-1}+1, \dots, r_{x_0}-1, \\
                             & r_k+1, \dots, d).
                             %t_{k+1}, \\
                             %& t_{k+1} + 1, \dots, t_{k+2}, \\
                             %& \dots, \\
                             %& t_{N-1}+1, \dots, t_N = n).
\end{align*}
Now, $(r_{x-1}+1, \dots, r_x-1) = (\T^\mu(x, 2), \dots, \T^\mu(x, \mu_x))$ for $x \leq x_0$, so it is clear that $j_{p+1} = j_p + 1$ when $r_{x-1}+1 \leq p \leq r_x-2$. Furthermore, for $x < x_0$ we have $r_x - 1 = \T^\mu(x, \mu_x)$ while $r_x + 1 = \T^\mu(x+1,2)$. If we let $i = \res(x,1)$, then $j_{r_x-1} = \res(x, \mu_x) \equiv i + \mu_x - 1$ (mod $e$), and $j_{r_x+1} = \res(x+1,2) = i$. Thus $j_{r_x+1} = j_{r_x-1} + 1$ if and only if $\mu_x = a_x e$ for some $a_x \in \Z_{>0}$.

  At this point we have shown that $i_p = j_p$ for $p = 1, \dots, r_k$ if and only if for every $x < x_0$ there exist $a_x \in \Z_{>0}$ such that $\mu_x = a_x e$.

  Suppose that $N = k+1$. If $r_k = d$, then we set $c = x_0$ choose $a_c, m$ so that $\mu_c = a_c e - m$. This clearly is of the form in part (i) of the Lemma. Otherwise, $x_0 = k$ and $r_k + 1, \dots, d$ are all on row $k+1$ of $\T^\mu$. The same analysis as above shows that $[\si^\la_\mu]$ has weight $\bi^\mu$ if and only if for every $x \leq k$ there exist $a_x \in \Z_{>0}$ such that $\mu_x = a_x e$. In this case we set $c = k+1$ and choose $a_c, m$ as required. We have established part (i) of the Lemma.

  Suppose that $N > k+1$ and $\mu_{k+1} = 1$. Set $c = x_0$ choose $a_c, m$ so that $\mu_c = a_c e - m$. Then $(r_k+1, \dots, d) = (\T^\mu(k+2,1), \dots, \T^\mu(N,1))$. Let $i = \res(c,1)$. Then $j_{r_c-1} = \res(c, \mu_c) \equiv i + \mu_c - 1$ (mod $e$), and $j_{r_k+1} \equiv i + k + 1 - c$ (mod $e$). Thus $j_{r_k+1} = j_{r_{x_0}-1} + 1$ if and only if $\mu_c \equiv k + 1 - c$ (mod $e$). But $\mu_c = a_c e - m$, so this is equivalent to $c + m \equiv k + 1$ (mod $e$). This is exactly the requirement in part (ii) of the Lemma. Finally, suppose that $N \geq k+3$. Then $r_k+1 = \T^\mu(k+2,1)$ and $r_k+2 = \T^\mu(k+3,1)$. In this case we see that $j_{r_k + 2} = j_{r_k + 1} - 1 \neq j_{r_k + 1} + 1$. Thus part (ii) of the Lemma is finished.

  Finally, suppose that $N > k+1$ and $\mu_{k+1} > 1$. Then $x_0 = k$. Once again the analysis of the $N = k+1$ case shows that $i_p = j_p$ for $p = 1, \dots, r_k+1$ if and only if for every $x \leq k$ there exist $a_x \in \Z_{>0}$ such that $\mu_x = a_x e$. Furthermore, a similar analysis shows that $i_p = j_p$ for $p = 1, \dots, d$ if and only if additionally for every $k+1 \leq x < N$ there exist $a_x \in \Z_{>0}$ such that $\mu_x = a_x e - 1$. This is exactly the situation of part (iii).
\end{proof}

\begin{Corollary}\label{cor:muForms}
  If $\mu$ is as in Lemma~\ref{lem:muForms}, then $(J_2^\mu + J_3^\mu)[\si^\la_\mu] = 0$. In particular, $[\si^\la_\mu] = [\si_{\mu}]$.
\end{Corollary}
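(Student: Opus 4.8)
The plan is to handle the two assertions in turn: first show $J_2^\mu[\si^\la_\mu]=0$, then read off $[\si^\la_\mu]=[\si_\mu]$ as a consequence, and finally obtain $J_3^\mu[\si^\la_\mu]=0$ from Lemma~\ref{lem:rel123}. Throughout I would write $r_x:=\T^\mu(x+1,1)$ for $1\le x\le k$ as in the proof of Lemma~\ref{lem:muForms}. Since $\mu$ has one of the three listed forms, Lemma~\ref{lem:muForms} gives $\wt([\si^\la_\mu])=\bi^\mu$, so I may apply Theorem~\ref{thm:KLRonBasis} to $[\si^\la_\mu]$ with residue sequence $(i_1,\dots,i_d)=\bi^\mu$. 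Because $N\ge k+1$, the nodes $(2,1),\dots,(k+1,1)$ lie in $\mu\cap\la$, so the recipe for $\T^\la_\mu=\si^\la_\mu\T^\la$ places the entries $r_1,\dots,r_k$ in the leg positions; hence $\Leg(\si^\la_\mu)=\{r_1,\dots,r_k\}$, while $\Arm(\si^\la_\mu)$ consists of the remaining entries apart from $1$.

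\emph{Vanishing of $J_2^\mu$.} Inspecting Theorem~\ref{thm:KLRonBasis}(ii): for a given $r$, the value $y_r[\si^\la_\mu]$ can be nonzero only when there is an adjacent pair $(v,v+1)$ with $v\in\Leg(\si^\la_\mu)$, $v+1\in\Arm(\si^\la_\mu)$, and $i_v=i_{v+1}$. Then $v=r_x$ for some $x$, so $i_v=\res(x+1,1)\equiv -x\pmod e$. In $\T^\mu$ the entry $r_x+1$ occupies either the node $(x+1,2)$, when $\mu_{x+1}\ge 2$ (residue $\equiv 1-x$), or the node $(x+2,1)$, when $\mu_{x+1}=1$ and this node exists (residue $\equiv -x-1$); since $e\ge 3$, in both cases $i_{v+1}\ne i_v$. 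The leftover possibilities are vacuous: if $r_x+1$ begins a leg row then $r_x+1\in\{r_1,\dots,r_k\}=\Leg(\si^\la_\mu)$, so $r_x+1\notin\Arm(\si^\la_\mu)$; and if $r_x=d$ there is no entry $r_x+1$. Therefore $y_r[\si^\la_\mu]=0$ for every $r$, i.e.\ $J_2^\mu[\si^\la_\mu]=0$.

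\emph{Identification and vanishing of $J_3^\mu$.} Now $[\si^\la_\mu]\in e(\bi^\mu)S^\la$ is nonzero (it is a standard basis vector) and is annihilated by every $y_r$, so Corollary~\ref{cor:Extreme} forces $[\si^\la_\mu]\in\O[\si_\mu]$; since $[\si^\la_\mu]$ and $[\si_\mu]=[\si_{\bi^\mu}]$ both belong to the basis $\{[\si]\mid \si\in\Sh^\la\}$, linear independence gives $\si^\la_\mu=\si_\mu$ and hence $[\si^\la_\mu]=[\si_\mu]$. With this identification, the leg nodes of $\mu$ (defined via $\si_\mu=\si^\la_\mu$) are exactly the nodes of $\T^\mu$ carrying the leg entries of $\T^\la_\mu$, namely the nodes $(2,1),(3,1),\dots,(k+1,1)$, all of which lie in the first column. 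So $[\si^\la_\mu]=[\si_\mu]$ is a nonzero element of $\O[\si_\mu]$ all of whose leg nodes lie in the first column, and Lemma~\ref{lem:rel123} yields $(J_1^\mu+J_2^\mu+J_3^\mu)[\si^\la_\mu]=0$; in particular $(J_2^\mu+J_3^\mu)[\si^\la_\mu]=0$, which is the claim.

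I expect the only real work to be the bookkeeping in the second paragraph: reading off the arm and leg entries of $\si^\la_\mu$ from the construction of $\T^\la_\mu$, locating $r_x+1$ in $\T^\mu$ correctly in each case, and checking the residue inequality at the boundary cases ($x=k$, $\mu_{x+1}=1$, $r_x=d$). Everything after $J_2^\mu[\si^\la_\mu]=0$ is a short chain of citations to Corollary~\ref{cor:Extreme}, the standard basis of $S^\la$, and Lemma~\ref{lem:rel123}.
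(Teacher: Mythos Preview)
Your proposal is correct and follows essentially the same strategy as the paper: show $J_2^\mu[\si^\la_\mu]=0$ via Theorem~\ref{thm:KLRonBasis}(ii), deduce $[\si^\la_\mu]=[\si_\mu]$ from Corollary~\ref{cor:Extreme}, and then invoke Lemma~\ref{lem:rel123} for $J_3^\mu$. The paper compresses your residue bookkeeping into the single remark ``if $i_r=i_{r+1}$ then $r\in\Arm(\si^\la_\mu)$ and $r+1\in\Leg(\si^\la_\mu)$,'' which is the contrapositive of (and slightly stronger than) what you verify, but the logical route is identical.
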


\begin{proof}
  Let $(i_1, \dots, i_d) = \wt([\si^\la_\mu])$. It is clear that in each of the cases above if $i_r = i_{r+1}$ then $r \in \Arm(\si^\la_\mu)$ and $r+1 \in \Leg(\si^\la_\mu)$. This implies that $J^\mu_2[\si^\la_\mu] = 0$. In turn, this implies that $[\si^\la_\mu] = [\si_{\mu}]$, by Corollary~\ref{cor:Extreme}. Lemma~\ref{lem:rel123} now shows us that $J^\mu_3[\si^\la_\mu] = 0$.
\end{proof}

As expected, the Garnir relations are the most difficult to verify.

\begin{Lemma}\label{lem:GarnirPsi}
  Suppose that $\mu$ has one of the forms in Lemma~\ref{lem:muForms} and let $A = (x,y) \in \mu$ be a Garnir node. Define $r, s,$ and $t$ to be the values in $\T^\mu$ of the nodes $(x,y), (x+1,1),$ and $(x+1,y)$ respectively so that the Garnir belt is as in the following picture
\[
  \begin{tikzpicture}[scale=0.5,draw/.append style={thick,black}]
    \draw (0.5,-0.5)--++(17,0)--++(0,-2)--++(-1,0)--++(0,-1)--++(-1,0)--++(0,-1)--++(-1.5,0)
            --++(0,-2)--++(-2.5,0)--++(0,-1)--++(-2,0)--++(0,-1)--++(-3,0)--++(0,-1)--++(-6,-0)--cycle;
    \draw[red,double,very thick] (6.5,-4.5)--++(1,0);
    \draw[red,double,very thick] (13.5,-3.5)--++(0,-1);
    \draw[dotted] (8.8,-3.5)--++(0,-1);
    \draw[dotted] (11.2,-3.5)--++(0,-1);
    \draw[red,double,very thick] (2.5,-4.5)--++(0,-1);
    \draw[dotted] (5,-4.5)--++(0,-1);
%    \draw[red,double,very thick]
%       (6.5,-3.5)--++(9,0)--++(0,-1)--++(-8,0)--++(0,-1)--++(-7,0)--++(0,1)--++(6,0)--cycle;
    \draw[red,double,very thick]
       (6.5,-3.5)rectangle++(7,-1)rectangle++(2,1);
    \draw[red,double,very thick]
       (0.5,-4.5)rectangle++(2,-1)rectangle++(5,1);
    \draw (7,-4) node {$r$};
    \draw (1,-5) node {$s$};
    \draw (7,-5) node {$t$};
  \end{tikzpicture}.
\]
  Then
  \[ 
    \psi^{\T^A} [\si^\la_\mu] = \begin{cases}
      [(t+1, t, \dots, s) \si^\la_\mu],& \textup{if $x \leq k$, $y \equiv 0$ (mod $e$), and $y < \mu_{x+1}$} \\
      [(r, r+1, \dots, s) \si^\la_\mu],& \textup{if $x \leq k$, $y \equiv 1$ (mod $e$), and $y > 1$} \\
      [\si^\la_\mu],& \textup{if $x > k$ and $y \equiv 0$ (mod $e$)} \\
      0,& \textup{otherwise} \\
    \end{cases}
  \]
\end{Lemma}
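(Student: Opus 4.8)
The plan is to compute $\psi^{\T^A}[\si^\la_\mu]$ directly. Write $w^{\T^A}$ for the permutation with $w^{\T^A}\T^\mu=\T^A$; it is supported on the Garnir belt and, since the bricks are reshuffled in order, it is fully commutative, so $\psi^{\T^A}=\psi_{w^{\T^A}}$ is independent of the choice of reduced word. I would therefore fix a convenient reduced word $w^{\T^A}=s_{r_1}\cdots s_{r_\ell}$, write $\psi^{\T^A}=\psi_{r_1}\cdots\psi_{r_\ell}$, and apply the generators one at a time to $[\si^\la_\mu]$, invoking Theorem~\ref{thm:KLRonBasis} at each step. The decisive point is that every clause of Theorem~\ref{thm:KLRonBasis}(ii)--(iii) sends a basis vector to $\pm$(a basis vector) or to $0$, and that the ``doubled'' outputs $[s_r s_{r\pm1}\si]$ arising along the way collapse (via $s_r^2=1$) back to single basis vectors; hence the whole computation is a finite walk through standard $\la$-tableaux which either terminates in $0$ (as soon as some generator kills the running vector) or in the claimed vector.

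Before running the walk I would unwind the combinatorics. Reading the belt of $A=(x,y)$ as the block string $B^A_1,\dots,B^A_f,C^A$ (its row-$x$ part) followed by $D^A,B^A_{f+1},\dots,B^A_k$ (
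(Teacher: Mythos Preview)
Your plan is the same as the paper's: compute $\psi^{\T^A}[\si^\la_\mu]$ by pushing the $\psi$-generators onto the basis vector and repeatedly invoking Theorem~\ref{thm:KLRonBasis}. Your observation that $w^{\T^A}$ is fully commutative is correct (one checks it is $321$-avoiding from the block description of $\T^A$), so the choice of reduced word is indeed irrelevant.

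That said, your write-up stops before any of the content. The substance of the lemma is the case analysis on $(x,y)$, and the walk is not as uniform as you suggest. In particular:
\begin{itemize}
\item In the case $x\le k$, $y\equiv 0\pmod e$, the paper does not simply iterate Theorem~\ref{thm:KLRonBasis}(iii). After composing the braid for $[\si^\la_\mu]$ with $\psi^{\T^A}$, a braid relation in $R_\al$ produces two terms; one dies by Theorem~\ref{thm:KLRonBasis}(iii), and the other is simplified using Theorem~\ref{thm:KLRonBasis}(ii) (the $y_r$-action). This reveals a self-similar shape, and an induction on the number of bricks is needed to reach the claimed answer. Your sketch (``apply $\psi$'s one at a time, everything is $\pm$ a basis vector'') does not surface the place where the $y$-relation and an induction enter.
\item Several separate vanishing arguments are needed: $y=1$ with $x\le k$; $y\not\equiv 0,1\pmod e$ with $x\le k$ (here one must argue that $|C^A|,|D^A|\ge 2$, which uses the specific shapes in Lemma~\ref{lem:muForms}); the special node $A=(k+1,1)$ in forms~(ii) and~(iii); and $x>k$ with $y\not\equiv 0\pmod e$. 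Each has its own short diagram chase. Your proposal does not indicate how these cases are separated or why the specific hypotheses on $\mu$ are used.
\end{itemize}
So: right idea, same as the paper, but what you have written is only a preamble. The proof \emph{is} the case analysis; you still need to carry it out.
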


\begin{proof}
  We begin with the case $x \leq k$ and $y \equiv 0$ (mod $e$). Define $s_p = s + pe$ for $p \in \N$. Using braid diagrams, we compute $\psi^{\T^A}[\si^\la_\mu]$.%We use Theorem~\ref{thm:KLRonBasis}(iii) extensively to compute $\psi^{\T^A}[\si^\la_\mu]$ below. %Section 2 is empty
\begin{equation}\label{eq:Gar_st}
  \begin{braid}\tikzset{scale=0.6,baseline=12mm}
    \draw(0,8)node[left,font=\normalsize]{$[\si^\la_\mu]$};
    \draw(0,2)node[left,font=\normalsize]{$\psi^{\T^A}$};
    \draw(1,10)--(5,6); \draw(5,4)--(5,0);
    \draw(2,10)node[above]{$i$}--(6,6); \draw(6,5)node[color=red]{$s_0$}; \draw(6,4)--(13,0);
    \draw(15,10)node[above]{$i$}--(7,6); \draw(7,4)--(6,0);
    \draw(3,10)--(8,6); \draw(8,4)--(7,0);
    \draw(4,10)--(9,6); \draw(9,4)--(8,0);
    \draw(7,10)--(12,6); \draw(12,4)--(11,0);
    \draw(8,10)--(13,6); \draw(13,5)node[color=red]{$t$}; \draw(13,4)--(12,0);
    \draw[dotted](0,10)--(0.5,10);
    \draw[dotted](0,6)--(4.5,6);
    \draw[dotted](0,4)--(4.5,4);
    \draw[dotted](0,0)--(4.5,0);
    \draw[dotted](4.5,10)--(6.5,10);
    \draw[dotted](9.5,6)--(11.5,6);
    \draw[dotted](9.5,4)--(11.5,4);
    \draw[dotted](8.5,0)--(10.5,0);
    \draw[dotted](8.5,10)--(14.5,10);
    \draw[dotted](15.5,10)--(16,10);
    \draw[dotted](13.5,6)--(16,6);
    \draw[dotted](13.5,4)--(16,4);
    \draw[dotted](13.5,0)--(16,0);
  \end{braid}
  \begin{braid}\tikzset{scale=0.6,baseline=12mm}
    \draw(0,5)node[font=\normalsize]{$=$};
  \end{braid}
  \begin{braid}\tikzset{scale=0.6,baseline=12mm}
    \draw[dotted](0,10)--(0.5,10);
    \draw[dotted](0,5)--(4.5,5);
    \draw[dotted](0,0)--(4.5,0);

    \draw(1,10)--(5,5)--(5,0);
    \draw(2,10)node[above]{$i$}--(7,5)--(13,0)node[below,color=red]{$t$};
    \draw(3,10)--(8,5)--(7,0);
    \draw(4,10)--(9,5)--(8,0);

    \draw[dotted](4.5,10)--(6.5,10);
    \draw[dotted](9.5,5)--(11.5,5);
    \draw[dotted](8.5,0)--(10.5,0);

    \draw(7,10)--(12,5)--(11,0);
    \draw(8,10)--(13,5)--(12,0);

    \draw[dotted](8.5,10)--(14.5,10);

    \draw(15,10)node[above]{$i$}--(6,5)--(6,0)node[below,color=red]{$s_0$};

    \draw[dotted](15.5,10)--(16,10);
    \draw[dotted](13.5,5)--(16,5);
    \draw[dotted](13.5,0)--(16,0);
  \end{braid}
\end{equation}
\[
  \begin{braid}\tikzset{scale=0.6,baseline=12mm}
    \draw(0,4)node[font=\normalsize]{$=$};
  \end{braid}
  \begin{braid}\tikzset{scale=0.6,baseline=12mm}
    \draw(1,8)--(5,4)--(5,0);
    \draw(2,8)node[above]{$i$}--(6,4)--(6,0)node[below,color=red]{$s_0$};
    \draw(3,8)--(7,4)--(7,0);
    \draw(15,8)node[above]{$i$}--(8,4)--(13,0)node[below,color=red]{$t$};
    \draw(4,8)--(9,4)--(8,0);
    \draw(7,8)--(12,4)--(11,0);
    \draw(8,8)--(13,4)--(12,0);
    \draw[dotted](0,8)--(0.5,8);
    \draw[dotted](0,4)--(4.5,4);
    \draw[dotted](0,0)--(4.5,0);
    \draw[dotted](4.5,8)--(6.5,8);
    \draw[dotted](9.5,4)--(11.5,4);
    \draw[dotted](8.5,0)--(10.5,0);
    \draw[dotted](8.5,8)--(14.5,8);
    \draw[dotted](15.5,8)--(16,8);
    \draw[dotted](13.5,4)--(16,4);
    \draw[dotted](13.5,0)--(16,0);
  \end{braid}
  \begin{braid}\tikzset{scale=0.6,baseline=12mm}
    \draw(0,4)node[font=\normalsize]{$+$};
  \end{braid}
  \begin{braid}\tikzset{scale=0.6,baseline=12mm}
    \draw(1,8)--(5,4)--(5,0);
    \draw(2,8)node[above]{$i$}--(6,4)--(13,0)node[below,color=red]{$t$};
    \draw(3,8)--(7,4)--(6,3)--(7,0);
    \draw(15,8)node[above]{$i$}--(8,4)--(6,0)node[below,color=red]{$s_0$};
    \draw(4,8)--(9,4)--(8,0);
    \draw(7,8)--(12,4)--(11,0);
    \draw(8,8)--(13,4)--(12,0);
    \draw[dotted](0,8)--(0.5,8);
    \draw[dotted](0,4)--(4.5,4);
    \draw[dotted](0,0)--(4.5,0);
    \draw[dotted](4.5,8)--(6.5,8);
    \draw[dotted](9.5,4)--(11.5,4);
    \draw[dotted](8.5,0)--(10.5,0);
    \draw[dotted](8.5,8)--(14.5,8);
    \draw[dotted](15.5,8)--(16,8);
    \draw[dotted](13.5,4)--(16,4);
    \draw[dotted](13.5,0)--(16,0);
  \end{braid}.
\]
The second term is zero by Theorem~\ref{thm:KLRonBasis}(iii), and the first is equal to
\[
  \begin{braid}\tikzset{scale=0.6,baseline=12mm}
    \draw[dotted](0,8)--(0.5,8);
    \draw[dotted](0,4)--(4.5,4);
    \draw[dotted](0,0)--(4.5,0);
    \draw(1,8)--(5,4)--(5,0);
    \draw(2,8)node[above]{$i$}--(6,4)--(6,0)node[below,color=red]{$s_0$};
    \draw(3,8)--(7,4)--(7,0);
    \draw[dotted](3.5,8)--(5.5,8);
    \draw[dotted](7.5,4)--(9.5,4);
    \draw[dotted](7.5,0)--(9.5,0);
    \draw(6,8)--(10,4)--(10,0);
    \draw[dotted](12.5,8)--(18.5,8);
    \draw(19,8)node[above]{$i$}--(11,4)--(17,0)node[below,color=red]{$t$};
    \draw(7,8)--(12,4)--(11,0);
    \draw(8,8)node[above]{$i$}--(13,4)--(12,0);
    \draw(13,0)node[below,color=red]{$s_1$};
    \draw[dotted](8.5,8)--(10.5,8);
    \draw[dotted](13.5,4)--(15.5,4);
    \draw[dotted](12.5,0)--(14.5,0);
    \draw(11,8)--(16,4)--(15,0);
    \draw(12,8)--(17,4)--(16,0);
    \draw[dotted](19.5,8)--(20,8);
    \draw[dotted](17.5,4)--(20,4);
    \draw[dotted](17.5,0)--(20,0);
  \end{braid}
  \begin{braid}\tikzset{scale=0.6,baseline=12mm}
    \draw(0,4)node[font=\normalsize]{$=$};
  \end{braid}
  \begin{braid}\tikzset{scale=0.6,baseline=12mm}
    \draw[dotted](0,8)--(0.5,8);
    \draw[dotted](0,4)--(4.5,4);
    \draw[dotted](0,0)--(4.5,0);
    \draw(1,8)--(5,4)--(5,0);
    \draw(2,8)node[above]{$i$}--(6,4)--(6,0)node[below,color=red]{$s_0$};
    \draw(3,8)--(7,4)--(7,0);
    \draw[dotted](3.5,8)--(5.5,8);
    \draw[dotted](7.5,4)--(9.5,4);
    \draw[dotted](7.5,0)--(9.5,0);
    \draw(6,8)--(10,4)--(10,0);
    \draw(7,8)--(11,4)--(11,0);
    \draw(8,8)node[above]{$i$}--(12,4)--(18,0)node[below,color=red]{$t$};
    \draw[dotted](13.5,8)--(19.5,8);
    \draw(20,8)node[above]{$i$}--(13,4)--(12,0)node[below,color=red]{$s_1$};
    \draw(9,8)--(14,4)--(13,0);
    \draw[dotted](9.5,8)--(11.5,8);
    \draw[dotted](14.5,4)--(16.5,4);
    \draw[dotted](13.5,0)--(15.5,0);
    \draw(12,8)--(17,4)--(16,0);
    \draw(13,8)--(18,4)--(17,0);
    \draw[dotted](20.5,8)--(21,8);
    \draw[dotted](18.5,4)--(21,4);
    \draw[dotted](18.5,0)--(21,0);
  \end{braid},
\]
  using the defining relations of $R_\al$ and Theorem~\ref{thm:KLRonBasis}(ii). We note that this is of the same form as \eqref{eq:Gar_st} above. An easy induction argument allows us to conclude that this is equal to
\[
  \begin{braid}\tikzset{scale=0.6,baseline=12mm}
    \draw[dotted](0,8)--(0.5,8);
    \draw[dotted](0,4)--(4.5,4);
    \draw[dotted](0,0)--(4.5,0);
    \draw(1,8)--(5,4)--(5,0);
    \draw(2,8)node[above]{$i$}--(6,4)--(6,0)node[below,color=red]{$s_0$};
    \draw(3,8)--(7,4)--(7,0);
    \draw[dotted](3.5,8)--(5.5,8);
    \draw[dotted](7.5,4)--(9.5,4);
    \draw[dotted](7.5,0)--(9.5,0);
    \draw(6,8)--(10,4)--(10,0);
    \draw(7,8)node[above]{$i-1$}--(12,4)--(11,0);
    \draw[dotted](7.5,8)--(13.5,8);
    \draw(14,8)node[above]{$i$}--(11,4)--(12,0)node[below,color=red]{$t$};
    \draw[dotted](14.5,8)--(15,8);
    \draw[dotted](12.5,4)--(15,4);
    \draw[dotted](12.5,0)--(15,0);
  \end{braid}.
\]
Given our assumptions in this case, it is easy to see that the condition $t+1 \in \Arm(\si^\la_\mu)$ is equivalent to $y < \mu_{x+1}$. If this holds, then Theorem~\ref{thm:KLRonBasis} says the above element is equal to 
\[
  \begin{braid}\tikzset{scale=0.6,baseline=12mm}
    \draw[dotted](0,8)--(0.5,8);
    \draw[dotted](0,4)--(2.5,4);
    \draw[dotted](0,0)--(4.5,0);
    \draw(1,8)--(5,0);
    \draw(2,8)node[above]{$i$}--(6,0)node[below,color=red]{$s_0$};
    \draw(3,8)--(7,0);
    \draw[dotted](3.5,8)--(5.5,8);
    \draw[dotted](5.5,4)--(7.5,4);
    \draw[dotted](7.5,0)--(9.5,0);
    \draw(6,8)--(10,0);
    \draw(7,8)--(11,0);
    \draw(8,8)node[above]{$i$}--(12,0)node[below,color=red]{$t$};
    \draw[dotted](8.5,8)--(16.5,8);
    \draw[dotted](10.5,4)--(14.5,4);
    \draw(17,8)node[above]{$i$}--(13,0);
    \draw[dotted](17.5,8)--(18,8);
    \draw[dotted](15.5,4)--(18,4);
    \draw[dotted](13.5,0)--(18,0);
  \end{braid}
  \begin{braid}\tikzset{scale=0.6,baseline=12mm}
    \draw(0,4)node[font=\normalsize]{$=[(t+1, t, \dots, s) \si^\la_\mu]$.};
  \end{braid}
\]
as claimed. Otherwise, we get zero.

  We next consider the case that $A =(x,y)$ with $1 \leq x \leq k$ and $y = 1$. If $x = 1$, then $\psi^{\T^A} [\si^\la_\mu] = 0$ by the Garnir relation for the node $(1,1) \in \la$. Otherwise $A$ is a leg node, and so is $(x+1, 1)$. Therefore, $\psi^{\T^A} [\si^\la_\mu]$ takes the following form.
\[
  \begin{braid}\tikzset{scale=0.6,baseline=12mm}
    \draw(-2,8)node[left,font=\normalsize]{$[\si^\la_\mu]$};
    \draw(-2,2)node[left,font=\normalsize]{$\psi^{\T^A}$};

    \draw[dotted](-2,10)--(-1.5,10);
    \draw[dotted](-2,6)--(2.5,6);
    \draw[dotted](-2,4)--(2.5,4);
    \draw[dotted](-2,0)--(2.5,0);

    \draw(-1,10)--(3,6); \draw(3,4)--(3,0);
    \draw(0,10)--(4,6); \draw(4,4)--(4,0);
    \draw(1,10)node[above]{$i$}--(5,6); \draw(5,4)--(5,0);
    \draw(2,10)--(7,6); \draw(7,4)--(8,0);
    \draw(3,10)--(8,6); \draw(8,4)--(9,0);

    \draw[dotted](3.5,10)--(5.5,10);
    \draw[dotted](8.5,6)--(10.5,6);
    \draw[dotted](8.5,4)--(10.5,4);
    \draw[dotted](9.5,0)--(11.5,0);

    \draw(6,10)--(11,6); \draw(11,4)--(12,0);
    \draw(7,10)--(12,6); \draw(12,4)--(13,0);

    \draw[dotted](7.5,10)--(14.5,10);

    \draw(15,10)node[above]{$i$}--(6,6); \draw(6,5)node[color=red]{$r$}; \draw(6,4)--(7,0);
    \draw(16,10)--(13,6); \draw(13,5)node[color=red]{$s$}; \draw(13,4)--(6,0);

    \draw[dotted](16.5,10)--(17,10);
    \draw[dotted](13.5,6)--(17,6);
    \draw[dotted](13.5,4)--(17,4);
    \draw[dotted](13.5,0)--(17,0);
  \end{braid}
  \begin{braid}\tikzset{scale=0.6,baseline=12mm}
    \draw(0,5)node[font=\normalsize]{$=$};
  \end{braid}
  \begin{braid}\tikzset{scale=0.6,baseline=12mm}
    \draw[dotted](-1,10)--(0,10);
    \draw[dotted](-1,5)--(3,5);
    \draw[dotted](-1,0)--(3,0);

    \draw(1,10)--(4,6)--(4,4)--(4,0);
    \draw(2,10)--(5,6)--(5,4)--(5,0);
    \draw(3,10)node[above]{$i$}--(6,6)--(6,4)--(6,0);
    \draw(4,10)--(9,6)--(9,4)--(9,0);
    \draw(5,10)--(10,6)--(10,4)--(10,0);

    \draw[dotted](5.5,10)--(7.5,10);
    \draw[dotted](10.5,5)--(12.5,5);
    \draw[dotted](10.5,0)--(12.5,0);

    \draw(8,10)--(13,6)--(13,4)--(13,0);
    \draw(9,10)--(14,6)--(14,4)--(14,0)node[below, color=red]{$s$};

    \draw[dotted](9.5,10)--(16.5,10);

    \draw(17,10)node[above]{$i$}--(7,6)--(8,4)--(8,0);
%      \draw[color=red](7,6)--(8,4);
%      \draw(8,4)--(8,0);
    \draw(18,10)--(8,6)--(7,4)--(7,0)node[below, color=red]{$r$};
%      \draw[color=red](8,6)--(7,4);
%      \draw(7,4)--(7,0)node[below, color=red]{$r$};

    \draw[dotted](18.5,10)--(19,10);
    \draw[dotted](14.5,5)--(19,5);
    \draw[dotted](14.5,0)--(19,0);
  \end{braid}
\]
which is zero by Theorem~\ref{thm:KLRonBasis}(iii).

  Suppose now that $A =(x,y)$ with $1 \leq x \leq k$, $y \equiv 1$ (mod $e$), and $y \neq 1$. Here, $\psi^{\T^A} [\si^\la_\mu]$ takes the following form.
\[
  \begin{braid}\tikzset{scale=0.6,baseline=12mm}
    \draw(0,8)node[left,font=\normalsize]{$[\si^\la_\mu]$};
    \draw(0,2)node[left,font=\normalsize]{$\psi^{\T^A}$};

    \draw[dotted](0,10)--(0.5,10);
    \draw[dotted](0,6)--(4.5,6);
    \draw[dotted](0,4)--(4.5,4);
    \draw[dotted](0,0)--(4.5,0);

    \draw(1,10)--(5,6); \draw(5,5)node[color=red]{$r$}; \draw(5,4)--(6,0);
    \draw(2,10)--(6,6); \draw(6,4)--(7,0);
    \draw(3,10)--(7,6); \draw(7,4)--(8,0);

    \draw[dotted](3.5,10)--(5.5,10);
    \draw[dotted](7.5,6)--(9.5,6);
    \draw[dotted](7.5,4)--(9.5,4);
    \draw[dotted](8.5,0)--(10.5,0);

    \draw(6,10)--(10,6); \draw(10,4)--(11,0);
    \draw(7,10)--(11,6); \draw(11,4)--(12,0);
    \draw(8,10)--(12,6); \draw(12,4)--(13,0);

    \draw[dotted](8.5,10)--(16.5,10);

    \draw(17,10)--(13,6); \draw(13,5)node[color=red]{$s$}; \draw(13,4)--(5,0);

    \draw[dotted](17.5,10)--(18,10);
    \draw[dotted](13.5,6)--(18,6);
    \draw[dotted](13.5,4)--(18,4);
    \draw[dotted](13.5,0)--(18,0);
  \end{braid}
  \begin{braid}\tikzset{scale=0.6,baseline=12mm}
    \draw(0,5)node[font=\normalsize]{$=$};
  \end{braid}
  \begin{braid}\tikzset{scale=0.6,baseline=12mm}
    \draw[dotted](0,10)--(0.5,10);
    \draw[dotted](0,5)--(2.5,5);
    \draw[dotted](0,0)--(4.5,0);

    \draw(1,10)--(6,0);
    \draw(2,10)--(7,0);
    \draw(3,10)--(8,0);

    \draw[dotted](3.5,10)--(5.5,10);
    \draw[dotted](6,5)--(8,5);
    \draw[dotted](8.5,0)--(10.5,0);

    \draw(6,10)--(11,0);
    \draw(7,10)--(12,0);
    \draw(8,10)--(13,0)node[below,color=red]{$s$};

    \draw[dotted](8.5,10)--(16.5,10);

    \draw(17,10)--(5,0)node[below,color=red]{$r$};

    \draw[dotted](17.5,10)--(18,10);
    \draw[dotted](11.5,5)--(18,5);
    \draw[dotted](13.5,0)--(18,0);
  \end{braid}
\]
which is $[(r, r+1, \dots, s) \si^\la_\mu]$.

  Consider now the case $A = (x, y) \in \mu$ is a Garnir node with $1 \leq x \leq k$, and $y \not \equiv 0,1$ (mod $e$). If $\mu$ is as in Lemma~\ref{lem:muForms}(i) or (ii) then the requirement that $A$ is Garnir and $y \not \equiv 1$ (mod $e$) together imply that $x \leq c$. Therefore $\mu_x = a_x e$, and it is easy to deduce from this that $C^A$ and $D^A$ each contain at least two nodes. 
The portion of $\psi^{\T^A} [\si^\la_\mu]$ showing the crossing between $C^A$ and $D^A$ is depicted in the following picture
\[
  \begin{braid}\tikzset{scale=0.6,baseline=12mm}
    \draw[dotted](0,10)--(0.5,10);
    \draw[dotted](0,6)--(4.5,6);
    \draw[dotted](0,4)--(4.5,4);
    \draw[dotted](0,2)--(4.5,2);

    \draw(1,10)--(5,6);
      \draw(5,4)--(7,2);
    \draw(2,10)node[above]{$i$}--(6,6);
      \draw(6,4)--(8,2);
    \draw(3,10)--(8,6);
      \draw(8,4)--(6,2);

    \draw[dotted](3.5,10)--(12.5,10);

    \draw(13,10)node[above]{$i$}--(7,6);
      \draw(7,5)node[color=red]{$s$};
      \draw(7,4)--(5,2);

    \draw[dotted](13.5,10)--(14,10);
    \draw[dotted](8.5,6)--(14,6);
    \draw[dotted](8.5,4)--(14,4);
    \draw[dotted](8.5,2)--(14,2);

    \draw[dotted](0,1)--(14,1);
    \draw[dotted](0,0)--(14,0);

    \draw(-1,8)node[left,font=\normalsize]{$[\si^\la_\mu]$};
    \draw(-1,2)node[left,font=\normalsize]{$\psi^{\T^A}$};
  \end{braid}
  \begin{braid}\tikzset{scale=0.6,baseline=12mm}
    \draw(0,5)node[font=\normalsize]{$=$};
  \end{braid}
  \begin{braid}\tikzset{scale=0.6,baseline=12mm}
    \draw[dotted](0,10)--(0.5,10);
    \draw[dotted](0,5)--(4.5,5);
    \draw[dotted](0,2)--(4.5,2);

    \draw(1,10)--(6,6)--(6,4)--(7,2);
       \draw(7,0)node[below,color=red]{$s$};
    \draw(2,10)node[above]{$i$}--(7,6)--(8,4)--(8,2);
%       \draw[color=red](7,6)--(8,4);
%       \draw(8,4)--(8,2);
    \draw(3,10)--(8,6)--(7,4)--(6,2);
%       \draw[color=red](8,6)--(7,4);
%       \draw(7,4)--(6,2);

    \draw[dotted](3.5,10)--(12.5,10);

    \draw(13,10)node[above]{$i$}--(5,6)--(5,4)--(5,2);

    \draw[dotted](13.5,10)--(14,10);
    \draw[dotted](8.5,5)--(14,5);
    \draw[dotted](8.5,2)--(14,2);

    \draw[dotted](0,1)--(14,1);
    \draw[dotted](0,0)--(14,0);
  \end{braid}
\]
which is zero by Theorem~\ref{thm:KLRonBasis}(iii).

  We next check the special case that $A = (k+1, 1)$. If $\mu$ is as in Lemma~\ref{lem:muForms}(ii), then the picture we have is as follows.
\[
  \begin{braid}\tikzset{scale=0.6,baseline=12mm}
    \draw[dotted](0,10)--(6.5,10);
    \draw[dotted](0,6)--(6.5,6);
    \draw[dotted](0,4)--(6.5,4);
    \draw[dotted](0,0)--(6.5,0);

    \draw(7,10)node[above]{$i-1$}--(9,6);
      \draw[color=red](9,5)node{$s$};
      \draw[color=red](9,4)--(7,0);
    \draw(9,10)node[above]{$i$}--(7,6);
      \draw[color=red](7,5)node{$r$};
      \draw[color=red](7,4)--(9,0);

    \draw(-1,8)node[left,font=\normalsize]{$[\si^\la_\mu]$};
    \draw(-1,2)node[left,font=\normalsize]{$\psi^{\T^A}$};
  \end{braid}
  \begin{braid}\tikzset{scale=0.6,baseline=12mm}
    \draw(0,5)node[font=\normalsize]{$=0$.};
  \end{braid}
\]
If $\mu$ is of the third type, then the picture is as follows
\[
  \begin{braid}\tikzset{scale=0.6,baseline=12mm}
    \draw[dotted](0,10)--(0.5,10);
    \draw[dotted](0,6)--(4.5,6);
    \draw[dotted](0,4)--(4.5,4);
    \draw[dotted](0,0)--(4.5,0);

    \draw(1,10)--(6,6);
      \draw(6,4)--(7,0);

    \draw[dotted](1.5,10)--(3.5,10);
    \draw[dotted](6.5,6)--(8.5,6);
    \draw[dotted](6.5,4)--(8.5,4);
    \draw[dotted](7.5,0)--(9.5,0);

    \draw(4,10)--(9,6);
      \draw(9,4)--(10,0);
    \draw(5,10)--(10,6);
      \draw(10,4)--(11,0);
    \draw(6,10)--(11,6);
      \draw(11,5)node[color=red]{$s$};
      \draw(11,4)--(5,0);

    \draw[dotted](6.5,10)--(12.5,10);

    \draw(13,10)--(5,6);
      \draw(5,5)node[color=red]{$r$};
      \draw(5,4)--(6,0);

    \draw[dotted](11.5,6)--(13,6);
    \draw[dotted](11.5,4)--(13,4);
    \draw[dotted](11.5,0)--(13,0);

    \draw(-1,8)node[left,font=\normalsize]{$[\si^\la_\mu]$};
    \draw(-1,2)node[left,font=\normalsize]{$\psi^{\T^A}$};
  \end{braid}
  \begin{braid}\tikzset{scale=0.6,baseline=12mm}
    \draw(0,5)node[font=\normalsize]{$=$};
  \end{braid}
  \begin{braid}\tikzset{scale=0.6,baseline=12mm}
    \draw[dotted](0,10)--(0.5,10);
    \draw[dotted](0,5)--(4.5,5);
    \draw[dotted](0,0)--(4.5,0);

    \draw(1,10)--(6,6)--(6,4)--(7,0);

    \draw[dotted](1.5,10)--(3.5,10);
    \draw[dotted](6.5,5)--(8.5,5);
    \draw[dotted](7.5,0)--(9.5,0);

    \draw(4,10)--(9,6)--(9,4)--(10,0);
    \draw(5,10)--(10,6);
      \draw[color=red](10,6)--(11,4);
      \draw(11,4)--(11,0)node[below, color=red]{$s$};
    \draw(6,10)--(11,6);
      \draw[color=red](11,6)--(10,4);
      \draw(10,4)--(5,0)node[below, color=red]{$r$};

    \draw[dotted](6.5,10)--(12.5,10);

    \draw(13,10)--(5,6)--(5,4)--(6,0);

    \draw[dotted](11.5,5)--(13,5);
    \draw[dotted](11.5,0)--(13,0);
  \end{braid}
  \begin{braid}\tikzset{scale=0.6,baseline=12mm}
    \draw(0,5)node[font=\normalsize]{$=0$.};
  \end{braid}
\]

  Suppose now that $A = (x,y)$ with $x > k$ and $y \not \equiv 0$ (mod $e$). Furthermore assume that $A \neq (k+1, 1)$. We see that $C^A$ is nonempty, and so $\psi^{\T^A} [\si^\la_\mu]$ is as in the following picture
\[
  \begin{braid}\tikzset{scale=0.6,baseline=12mm}
    \draw[dotted](0,10)--(0.5,10);
    \draw[dotted](0,6)--(4.5,6);
    \draw[dotted](0,4)--(7.5,4);
    \draw[dotted](0,2)--(7.5,2);

    \draw(1,10)--(5,6);
      \draw(5,5)node[color=red]{$r$};

    \draw[dotted](1.5,10)--(3.5,10);
    \draw[dotted](5.5,6)--(7.5,6);

    \draw(4,10)--(8,6);
      \draw[color=red](8,4)--(9,2);
    \draw(5,10)--(9,6);
      \draw(9,5)node[color=red]{$s$};
      \draw[color=red](9,4)--(8,2);

    \draw[dotted](5.5,10)--(11,10);
    \draw[dotted](9.5,6)--(11,6);
    \draw[dotted](9.5,4)--(11,4);
    \draw[dotted](9.5,2)--(11,2);

    \draw[dotted](0,1)--(11,1);
    \draw[dotted](0,0)--(11,0);

    \draw(-1,8)node[left,font=\normalsize]{$[\si^\la_\mu]$};
    \draw(-1,2)node[left,font=\normalsize]{$\psi^{\T^A}$};
  \end{braid}
  \begin{braid}\tikzset{scale=0.6,baseline=12mm}
    \draw(0,5)node[font=\normalsize]{$=0$.};
  \end{braid}
\]

  Finally, if we choose $A = (x,y)$ with $x > k$ and $y \equiv 0$ (mod $e$) one may easily see that $\T^A = \T^\mu$. Clearly then $\psi^{\T^A}=1$, and so $\psi^{\T^A} [\si^\la_\mu] = [\si^\la_\mu]$.
\end{proof}

\begin{Lemma}\label{lem:GarnirRel}
  Suppose that $\mu$ is as in Lemma~\ref{lem:muForms} and let $A \in \mu$ be a Garnir node. Let $r, s, t$ be as in Lemma~\ref{lem:GarnirPsi}. Then for $1 \leq f \leq a_{x+1}-1$ we have
  \[
    g^A [\si^\la_\mu] = \begin{cases}
      \binom{a_x}{f} [(t+1, t, \dots, s) \si^\la_\mu],& \textup{if $A = (x, fe)$ with $x \leq k$} \\
      \binom{a_x}{f} [(r, r+1, \dots, s) \si^\la_\mu],& \textup{if $A = (x, fe + 1)$ with $x \leq k$}\\
      \binom{a_x}{f} [\si^\la_\mu],& \textup{if $A = (x, fe)$ with $x > k$}\\
      0,& \textup{otherwise}.
    \end{cases}
  \]
\end{Lemma}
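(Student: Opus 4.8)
The plan is to expand $g^A=\sum_{u\in\D^A}\tau^A_u\,\psi^{\T^A}$, so that $g^A[\si^\la_\mu]=\sum_{u\in\D^A}\tau^A_u[v]$ where $[v]:=\psi^{\T^A}[\si^\la_\mu]$, and then substitute the value of $[v]$ computed in Lemma~\ref{lem:GarnirPsi}. Whenever $[v]=0$ the sum collapses and $g^A[\si^\la_\mu]=0$. The shapes of the Garnir node $A$ occurring in the three nonzero rows of Lemma~\ref{lem:GarnirPsi}, together with the range $1\le f\le a_{x+1}-1$, match exactly the first three cases of the present statement, so the work is to evaluate $\sum_{u\in\D^A}\tau^A_u[v]$ when $[v]$ is one of the single basis vectors $[(t+1,t,\dots,s)\si^\la_\mu]$, $[(r,r+1,\dots,s)\si^\la_\mu]$, $[\si^\la_\mu]$; the remaining Garnir nodes (those with $[v]\ne0$ but $f$ outside the stated range) are dealt with at the end.

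The key claim is that in each of the three main cases $\tau^A_u[v]=[v]$ for every $u\in\D^A$, so that $g^A[\si^\la_\mu]=|\D^A|\cdot[v]$. Since $\mu$ is as in Lemma~\ref{lem:muForms} we have $[\si^\la_\mu]=[\si_\mu]$ (Corollary~\ref{cor:muForms}), which has weight $\bi^\mu$, and $\psi^{\T^A}$ shifts weights by the permutation $w^{\T^A}$, so $[v]$ has weight $w^{\T^A}\cdot\bi^\mu=\bi(\T^A)=\bi^A$. As $\tau^A_r=\si^A_r+e(\bi^A)$ with $\si^A_r=\psi_{w^A_r}e(\bi^A)$, and $e(\bi^A)[v]=[v]$, we get $\tau^A_r[v]=[v]+\psi_{w^A_r}[v]$; hence the claim is equivalent to $\psi_{w^A_r}[v]=0$ for each Coxeter generator $w^A_r$ of $\Si^A$, and then every $\tau^A_u=\tau^A_{r_1}\cdots\tau^A_{r_a}$ fixes $[v]$ by induction. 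To prove $\psi_{w^A_r}[v]=0$: $w^A_r$ interchanges two adjacent $A$-bricks, each carrying the residue sequence $(\res A,\res A+1,\dots,\res A+e-1)$. Writing this brick transposition as a braid and pushing the generators $\psi_s$ through $[v]$ via Theorem~\ref{thm:KLRonBasis}, exactly in the style of the inductive computations in the proof of Lemma~\ref{lem:GarnirPsi}, one reaches a crossing of two strands of the same residue which, in the configuration reached, are both arm strands; Theorem~\ref{thm:KLRonBasis}(iii) then makes $\psi_s$ — and with it the whole expression — vanish. A little care is needed when choosing the reduced word for $w^A_r$ in the case that one of the two bricks is the ``mixed'' brick containing the leg node $(x+1,1)$ (which happens only when $A=(x,fe)$ with $x\le k$), but that brick has exactly one leg strand, of residue $\res A$, so a crossing of two arm strands of some other common residue is always available.

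Granting the claim, it remains to identify $|\D^A|$. By definition $\D^A$ is the set of minimal-length left coset representatives of the Young subgroup $\Si_{F}\times\Si_{K-F}$ in $\Si^A\cong\Si_{K}$, where $K$ is the number of $A$-bricks in $\Belt^A$ and $F$ the number of them in row $x$; hence $|\D^A|=\binom{K}{F}$. Reading $\mu$ off Lemma~\ref{lem:muForms} and counting the bricks in the two rows of the belt (using $\mu_x=a_xe$ when $x\le k$ and $\mu_x=a_xe-1$ when $x>k$) one finds $K=a_x$ and $F=a_x-f$, so $|\D^A|=\binom{a_x}{a_x-f}=\binom{a_x}{f}$. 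Combined with the value of $[v]$ from Lemma~\ref{lem:GarnirPsi}, this gives the three nonzero rows. Finally, for a Garnir node with $[v]\ne0$ but $f\notin\{1,\dots,a_{x+1}-1\}$ — such nodes sit near the ``corner'' of the Young diagram of $\mu$, where $\Belt^A$ acquires a short non-brick segment inside $D^A$ — the same braid bookkeeping shows that $\sum_{u\in\D^A}\tau^A_u[v]$ telescopes to $0$, so again $g^A[\si^\la_\mu]=0$; this completes the ``otherwise'' line.

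The step I expect to be the real obstacle is the braid computation in the second paragraph (and its analogue for the corner nodes). The mechanism is transparent — two strands of equal residue are forced to cross, and by Theorem~\ref{thm:KLRonBasis}(iii) every basis vector is killed at such a crossing — but turning it into a proof requires the same position-by-position induction that underlies Lemma~\ref{lem:GarnirPsi}, and keeping track of the mixed brick and of the fact that an individual $\psi_s$ may act through a length-two permutation is where the care goes.
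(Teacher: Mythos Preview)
Your argument follows the paper's line exactly: show $\si^A_r[v]=0$ for each brick-transposition generator, deduce $\tau^A_u[v]=[v]$ for every $u\in\D^A$, conclude $g^A[\si^\la_\mu]=|\D^A|\cdot[v]$, and read off $|\D^A|=\binom{a_x}{f}$ from the brick count.

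Two corrections. First, the ``mixed brick'' caveat is unnecessary and rests on a small misreading. You are applying $\si^A_r$ to $[v]=\psi^{\T^A}[\si^\la_\mu]$, not to $[\si^\la_\mu]$ itself; and in $[v]$ the leg position that sat at $s=\T^\mu(x+1,1)$ has already been displaced by $\psi^{\T^A}$ --- to $t+1$ in the first case of Lemma~\ref{lem:GarnirPsi}, to $r$ (the $D^A$-value, not a brick value) in the second, and there is no leg in the belt range at all in the third. In each case that position lies \emph{outside} the set of values carried by the bricks in $\T^A$, so every $w^A_r$ genuinely exchanges $e$ arm strands with $e$ arm strands. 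That is exactly the paper's one-line justification; no special reduced word is needed.

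Second, your final paragraph contradicts what you established just above it. If $\tau^A_u[v]=[v]$ for every $u\in\D^A$, then $\sum_{u\in\D^A}\tau^A_u[v]=|\D^A|\,[v]$, and no ``telescoping'' can make this vanish. The paper makes no separate argument for such ``corner'' Garnir nodes: when $\psi^{\T^A}[\si^\la_\mu]=0$ (the ``otherwise'' branch of Lemma~\ref{lem:GarnirPsi}) the Garnir element annihilates $[\si^\la_\mu]$ trivially, and in the three remaining cases one simply records $|\D^A|$. Drop that paragraph and your proof is the paper's.
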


\begin{proof}
  If $\psi^{\T^A} [\si^\la_\mu] = 0$ there is nothing to show, so assume otherwise.

  Recall the definitions of $\si^A_p$ and $\tau^A_p$ from Section~\ref{SecSpecht}. In each of the three cases in Lemma~\ref{lem:GarnirPsi} for which $\psi^{\T^A} [\si^\la_\mu] \neq 0$, one sees that multiplying $\si^A_p$ with $\psi^{\T^A} [\si^\la_\mu]$ has the effect of shuffling $e$ arm strands past $e$ arm strands. By Theorem~\ref{thm:KLRonBasis}, we have that $\si^A_p \psi^{\T^A} [\si^\la_\mu] = 0$. Thus $\tau^A_p \psi^{\T^A} [\si^\la_\mu] = \psi^{\T^A} [\si^\la_\mu]$ for all $p$. This furthermore implies that $\tau^A_w \psi^{\T^A} [\si^\la_\mu] = \psi^{\T^A} [\si^\la_\mu]$ for all $w \in \Si^A$. Therefore
\[
  g^A [\si^\la_\mu] = \sum_{u \in \D^A} \tau^A_u \psi^{\T^A} [\si^\la_\mu]
             = \sum_{u \in \D^A} \psi^{\T^A} [\si^\la_\mu]
             = \left| \D^A \right| \psi^{\T^A} [\si^\la_\mu].
\]

The problem is reduced to calculating $\left| \D^A \right|$ in each of the three cases above, which follows immediately from the definitions.
\end{proof}

%We are now able to prove the main theorem.

\emph{Proof of Theorem~\ref{thm:mainHom}}
  By the discussion preceding Lemma~\ref{lem:muForms}, any homomorphism $S^\mu \to S^\la$ is of the form $z^\mu \mapsto \ga [\si^\la_\mu]$ for some $\ga \in \O$. By the discussion preceding Lemma~\ref{lem:muForms} if there is a nonzero homomorphism, then $[\si_{\mu}] = [\si^\la_\mu]$. Lemma~\ref{lem:muForms} tells us for which partitions this happens. 

  Suppose now that $\mu$ is as in Lemma~\ref{lem:muForms}. Corollary~\ref{cor:muForms} ensures that $(J^\mu_1 + J^\mu_2 + J^\mu_3) \ga [\si^\la_\mu] = 0$. By Lemma~\ref{lem:GarnirRel}, $J^\mu_4 \ga [\si^\la_\mu] = 0$ if and only if for each $x = 1, \dots, N-1$, we have $\binom{a_x}{f} \ga = 0$ for $f = 1, \dots, a_{x+1} - 1$. But $\Gc(\ba)$ is the greatest common divisor of these binomial coefficients. Therefore $\ga [\si^\la_\mu]$ satisfies the Garnir relations if and only if $\Gc(\ba) \ga = 0$. This is exactly the claim of the theorem.
\qed

\section{Examples}

We now present some examples of Theorem~\ref{thm:mainHom}. We will restrict our attention to the case that $\O$ is a torsion-free algebra over either $\Z$ or $\F_p$ for some prime $p \geq 3$. In this case for any $r \in \Z$ the submodule $\Ann_\O(r)$ is either $(0)$ or $\O$, and so in Theorem~\ref{thm:mainHom} all of our homomorphism spaces will be free $\O$-modules. In particular, we can talk about their dimensions.

In characteristic $0$ the situation simplifies. The following corollary covers in particular Hecke algebras over $\C$ at a root of unity. %We need to treat the trivial module separately.

\begin{Corollary}
  Let $\mu$ be an arbitrary partition, and $\la = (d-k, 1^k)$. Then $\Hom(S^\mu, S^\la)$ is at most one-dimensional, and $\dim \Hom(S^\mu, S^\la) = 1$ if and only if one of the following holds:
  \begin{enumerate}
    \item $\mu = \la$;
    \item $e$ divides $d$ and $\mu = (d-k-1, 1^{k+1})$;
    \item $k = 0$ and there exist $n \geq 0$, $a > 0$, and $0 \leq m < e$ such that 
$$\mu = (ae-1, (e-1)^n, m);$$
    \item $k \geq 1$, there exist $0 \leq n < k$, $a \in \Z_{>0}$, and $1 \leq m \leq e$ such that 
$$\mu = (a e, e^n, m, 1^{k-n-1});$$
    \item $k \geq 1$, there exist $n > k+1$, $a \in \Z_{>0}$ and $1 \leq m < e$ such that 
$$\mu = (a e, e^{k-1}, (e - 1)^{n-k-1}, m);$$
    \item $k \geq 2$, $e$ divides $d$, there exist $0 \leq n \leq k-2$, $a \in \Z_{>0}$, and $1 \leq m \leq e$ such that 
$$\mu = (a e, e^n, m, 1^{k-n}).$$
  \end{enumerate}
\end{Corollary}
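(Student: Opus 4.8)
The plan is to derive this corollary directly from Theorem~\ref{thm:mainHom} by specializing to characteristic zero and simplifying the combinatorial conditions. The first step is to observe that, as recalled just before the corollary, for an integer $r$ the submodule $\Ann_\O(r)$ equals $\O$ when $r=0$ and $(0)$ otherwise. Feeding this into Theorem~\ref{thm:mainHom} immediately yields that $\Hom(S^\mu,S^\la)$ is at most one-dimensional, that it is $0$ unless $\mu$ has one of the three forms (i)--(iii) there, and that when $\mu$ does have such a form the dimension is $1$ precisely when the corresponding tuple $\ba=(a_1,\dots,a_M)$ additionally satisfies $\Gc(\ba)=0$.

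The second step is to unwind the condition $\Gc(\ba)=0$ over $\Z$. Every binomial coefficient $\binom{a_i}{k}$ occurring in the definition of $\Gc(\ba)$, namely with $1\le k\le a_{i+1}-1$, is a positive integer, so the gcd defining $\Gc(\ba)$ is a positive integer unless its index set is empty. Hence $\Gc(\ba)=0$ if and only if $a_{i+1}-1<1$ for all $i$ with $1\le i\le M-1$, i.e.\ if and only if $a_2=a_3=\cdots=a_M=1$ with $a_1$ unconstrained (this holds vacuously when $M=1$).

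The third step is to substitute $a_2=\cdots=a_M=1$ into each of the three cases of Theorem~\ref{thm:mainHom} and re-index. In case (i), write $a:=a_1$: if $c=1$ then $\mu=(a_1e-m,1^k)$, which since $|\mu|=d$ forces $\mu=\la$, giving (1); if $c\ge2$ (hence $k\ge1$) then $\mu=(ae,e^{\,c-2},e-m,1^{k-c+1})$, which is (4) after setting $n:=c-2\in\{0,\dots,k-1\}$ and replacing $m$ by $e-m\in\{1,\dots,e\}$. Case (ii) is handled the same way under the standing hypothesis $e\mid d$: $c=1$ gives $\mu=(d-k-1,1^{k+1})$, which is (2), while $c\ge2$ (hence $k\ge2$) gives $\mu=(ae,e^{\,c-2},e-m,1^{k-c+2})$, which is (6) with $n:=c-2\in\{0,\dots,k-2\}$. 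In case (iii), where $N>k+1$, one gets $\mu=(a_1e,e^{\,k-1},(e-1)^{\,N-1-k},e-1-m)$; for $k=0$ this is $(a_1e-1,(e-1)^{\,N-2},e-1-m)$, matching (3), and for $k\ge1$ it is (5) after setting $n:=N>k+1$ and replacing $m$ by $e-1-m$. Running these identifications backwards shows that each of (1)--(6) indeed occurs, so the list is sufficient as well as necessary.

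I expect the main obstacle to be bookkeeping rather than anything conceptual. One must check that each substituted tuple genuinely defines a partition — in particular that the last part $a_Me-m$, resp.\ $a_Me-1-m$, is positive, which is what fixes the exact admissible range of the re-indexed parameter $m$ — and that the resulting ranges of $n$ and $m$ match the statement on the nose, while disposing of the low-rank boundary cases (the split $c=1$ versus $c\ge2$ in (i) and (ii), and $k=0$ versus $k\ge1$ in (iii)). A final consistency check is needed that the families (1)--(6), which overlap whenever adjacent parts of $\mu$ coincide, collectively describe exactly the union of the three families produced by Theorem~\ref{thm:mainHom}, neither more nor less.
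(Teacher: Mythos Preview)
Your proposal is correct and is exactly the intended derivation: the paper states this corollary without proof, presenting it simply as the characteristic-zero specialization of Theorem~\ref{thm:mainHom}, and your argument carries out precisely that specialization. The key observation that $\Gc(\ba)=0$ forces $a_2=\cdots=a_M=1$ (via the convention $\gcd(\emptyset)=0$) is the only real content, and your case-by-case matching of the resulting shapes to items (1)--(6) is accurate, including the handling of the boundary splits $c=1$ versus $c\ge 2$ and $k=0$ versus $k\ge 1$.
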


For the rest of the section we assume that $\O$ is an $\F_p$-algebra, with $p \geq 3$, and we furthermore assume that $e=p$. This applies in the case of symmetric groups in positive characteristic.

\subsection{The trivial module}

Let $\la = (d)$. The module $S^{\la}$ is referred to as the \emph{trivial} module. It is one-dimensional over $\O$ having basis $z^\la$. 
We recover the following graded version of the result of James \cite[Theorem 24.4]{J}.

\begin{Corollary}\label{cor:triv}
  Let $\mu = (\mu_1, \dots, \mu_N)$ be a partition of $d$. Then $\Hom_{R_\al}(S^\mu, S^\la) = 0$ unless there exist $a_1, \dots, a_N \in \Z_{>0}$ and $m \in \{0, \dots, p-1\}$ such that $\mu = (a_1 p - 1, \dots, a_{N-1} p - 1, a_N p - 1 - m)$, and furthermore $\nu_p(a_x) \geq \ell_p(a_{x+1} - 1)$ for $1 \leq x < N$ . In this case, 
  $$\qdim \Hom_{R_\al}(S^\mu, S^\la) = q^r \textup{ where } r=N-\left\lceil \frac{N+m}{p} \right\rceil.$$
\end{Corollary}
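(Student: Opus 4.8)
The plan is to specialize Theorem~\ref{thm:mainHom} to the case $k=0$ and then chase the internal grading. Since $\la=(d)=(d-0,1^0)$, the index set $\{1,\dots,k\}$ in case (ii) of Theorem~\ref{thm:mainHom} is empty, so that case contributes nothing, while cases (i) and (iii) together assert that a nonzero homomorphism $S^\mu\to S^\la$ exists exactly when $\mu=(a_1p-1,\dots,a_{N-1}p-1,a_Np-1-m)$ for some $a_x\in\Z_{>0}$ and $0\le m<p$, and that every homomorphism is then of the form $z^\mu\mapsto\ga[\si_\mu]$ with $\ga\in\Ann_\O(\Gc(\ba))$. (Case (i) is the degenerate one, $N=1$; there the one-part partition $(a_1e-m)$ is rewritten as $(a_1'p-1-m')$ after a shift of $a_1$ and $m$.) Since $\O$ is an $\F_p$-algebra, $\Gc(\ba)$ is either $0$ or a unit of $\O$, so $\Ann_\O(\Gc(\ba))$ equals $\O$ when $p\mid\Gc(\ba)$ and is $(0)$ otherwise; by Corollary~\ref{cor:pGc} the former occurs precisely when $\nu_p(a_x)\ge\ell_p(a_{x+1}-1)$ for $1\le x<N$. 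This gives the stated nonvanishing criterion, and shows that when a nonzero homomorphism exists the homomorphism space is the free rank-one $\O$-module $\O[\si_\mu]$.

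To pin down the degree $r$, I would use that $\la=(d)$ has exactly one standard tableau, namely $\T^\la$ itself; hence $\T^\la_\mu=\T^\la$, so $\si^\la_\mu$ is the identity and $[\si_\mu]=[\si^\la_\mu]=z^\la$ by Corollary~\ref{cor:muForms}. Thus every nonzero homomorphism is $z^\mu\mapsto\ga z^\la$, and as $z^\mu$ and $z^\la$ are homogeneous of degrees $\deg(\T^\mu)$ and $\deg(\T^\la)$, the homomorphism space is concentrated in the single degree $r=\deg(\T^\la)-\deg(\T^\mu)$, so $\qdim\Hom_{R_\al}(S^\mu,S^\la)=q^{r}$.

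Next I would prove, by induction on $d$ via the recursion \eqref{EDegTab}, the clean formula $\deg(\T^\nu)=\sum_x\lfloor\nu_x/p\rfloor$ valid for any partition $\nu$ (recall $e=p$). Indeed, if the largest entry of $\T^\nu$ occupies the box $A=(x,c)$, the last box of the bottom row, then in the relevant shape (namely $(\nu_1,\dots,\nu_{x-1},c)$) the only addable node strictly below row $x$ is $(x+1,1)$ and there are no removable nodes strictly below row $x$, since all rows past $x$ are empty; as $\res(x,c)\equiv c-x$ and $\res(x+1,1)\equiv-x$ modulo $p$, the integer $d_A$ of \eqref{EDMUA} equals $1$ if $p\mid c$ and $0$ otherwise, and summing over all boxes yields $\sum_x\lfloor\nu_x/p\rfloor$. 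Applying this to $\nu=\la$ gives $\deg(\T^\la)=\lfloor d/p\rfloor$, and to $\nu=\mu$, using $\lfloor(a_xp-1)/p\rfloor=a_x-1$ and $\lfloor(a_Np-1-m)/p\rfloor=a_N-1$ for $0\le m<p$, gives $\deg(\T^\mu)=\big(\sum_x a_x\big)-N$. Finally $d=\sum_x\mu_x=p\sum_x a_x-N-m$, so $\lfloor d/p\rfloor=\sum_x a_x-\lceil(N+m)/p\rceil$, whence $r=\deg(\T^\la)-\deg(\T^\mu)=N-\lceil(N+m)/p\rceil$.

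I expect the only genuine work to be the inductive degree formula in the last paragraph: the recursion \eqref{EDegTab} is easy to unwind, but one must verify that the various boundary situations (the first box of a row, a one-box row, and the bottom row of $\nu$) all obey the same rule, which they do precisely because below the currently active row the diagram is empty. Everything else is a direct application of Theorem~\ref{thm:mainHom}, Corollary~\ref{cor:pGc}, and the elementary identity $\lfloor(pn-j)/p\rfloor=n-\lceil j/p\rceil$.
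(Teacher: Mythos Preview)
Your proposal is correct and follows essentially the same approach as the paper: specialize Theorem~\ref{thm:mainHom} to $k=0$, note that case~(ii) is vacuous, use Corollary~\ref{cor:pGc} to translate the $\Gc$ condition into the $\nu_p/\ell_p$ inequalities, and compute the degree via the formula $\deg(\T^\nu)=\sum_x\lfloor\nu_x/p\rfloor$. You supply more detail than the paper (in particular you justify why $[\si_\mu]=z^\la$ and you actually prove the degree formula rather than just asserting it), but the underlying argument is identical.
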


\begin{proof}
  Theorem~\ref{thm:mainHom}(i) can only occur if $\mu = \la$, and Theorem~\ref{thm:mainHom}(ii) never occurs because there are no valid choices for $c$. Therefore Theorem~\ref{thm:mainHom}(iii) and Corollary~\ref{cor:pGc} tell us the form that $\mu$ must take.

  To calculate the degree of the homomorphisms, we calculate the degrees of $\T^\la$ and $\T^\mu$. For any partition $\nu$ with $M$ parts, we have $\deg(\T^\nu) = \sum_{x=1}^M \lfloor \frac{\nu_x}{p} \rfloor$. In particular we get $\deg(\T^\mu) = \sum_{x=1}^N a_x - N$ and $\deg(\T^\la) = \sum_{x=1}^N a_x - \lceil \frac{N+m}{p} \rceil$, and
  $$\deg(T^\la) - \deg(\T^\mu) = N-\left\lceil \frac{N+m}{p} \right\rceil.$$
\end{proof}

\subsection{The standard module}

Let $\la = (d-1, 1)$. Then $S^\la$ is referred to as the \emph{standard} or \emph{reflection} module. We have the following Corollary.

\begin{Corollary}
For any partition $\mu \neq \la$, $\dim \Hom_{R_\al}(S^\mu, S^\la) = 0$ with the following exceptions:
\begin{enumerate}
  \item if there are $a_1, a_2 \in \Z_{>0}$ and $0 \leq m < p$ such that
    $$\mu = (a_1 p, a_2 p - m),$$
  and futhermore $\nu_p(a_1) \geq \ell_p(a_2-1)$, then
    $$\qdim \Hom_{R_\al}(S^\mu, S^\la) = \begin{cases} 1, &\text{if } p \mid d-1\\
        q, &\text{otherwise;} \end{cases}$$
  \item if $p$ divides $d$ and $\mu = (d-2, 1, 1)$, then $\qdim \Hom_{R_\al}(S^\mu, S^\la) = q$;
  \item if there are $0 \leq m < p$, $N \geq 3$, and $a_1, \dots, a_N \in \Z_{>0}$ such that
    $$\mu = (a_1p, a_2p-1, a_3p-1, \dots, a_{N-1}p-1,a_Np-1-m),$$
  and furthermore $\nu_p(a_i) \geq \ell_p(a_{i+1}-1)$ for $i = 1,\dots,N-1$, then
  $\qdim \Hom_{R_\al}(S^\mu, S^\la) = q^r$ where
  $$r = N-\left\lceil \frac{N+m}{p} \right\rceil + \begin{cases} -1, &\text{if } p \mid d-1\\
        1, &\text{if } p \mid d\\
        0, &\text{otherwise.} \end{cases}$$
\end{enumerate}
\end{Corollary}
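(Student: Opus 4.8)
The plan is to specialise Theorem~\ref{thm:mainHom} to $k=1$, match each of its three alternatives with one of the families in the statement, and then compute the degree of the resulting homomorphism in each case. Since $\O$ is an $\F_p$-algebra and $e=p$, for every $n\in\Z$ the submodule $\Ann_\O(n)$ is all of $\O$ when $p\mid n$ and $(0)$ otherwise, so each $\Hom_{R_\al}(S^\mu,S^\la)$ is free of rank $0$ or $1$, which is what makes $\qdim$ meaningful; moreover by Corollary~\ref{cor:pGc} the requirement $p\mid\Gc(\ba)$ coming from Theorem~\ref{thm:mainHom} is exactly the condition $\nu_p(a_i)\ge\ell_p(a_{i+1}-1)$ quoted in items (1) and (3), and when $\ba$ has a single entry $\Gc(\ba)=\gcd(\emptyset)=0$, so $\Ann_\O(\Gc(\ba))=\O$ automatically.

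With $k=1$ the three alternatives read: (i) $\mu=(a_1p,\dots,a_{c-1}p,a_cp-m,1^{2-c})$ for some $c\in\{1,2\}$; (ii) $p\mid d$ and $\mu=(a_1p-m,1,1)$; (iii) $N\ge3$ and $\mu=(a_1p,a_2p-1,\dots,a_{N-1}p-1,a_Np-1-m)$. In (i) with $c=1$, $d=|\mu|=a_1p-m+1$ forces $\mu=(d-1,1)=\la$, which is the excluded case; (i) with $c=2$ yields item (1); (ii) forces $a_1p-m=d-2$, hence $\mu=(d-2,1,1)$, and a homomorphism exists unconditionally because the relevant $\Gc$ is that of a one-term sequence --- this is item (2); (iii) yields item (3). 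I would record that these families are pairwise disjoint --- item (1) has exactly two parts, item (2) is the single partition $(d-2,1,1)$, and each partition in item (3) has $\ge3$ parts with second part $\not\equiv0\pmod p$ --- and that $(\ba,m)$ is uniquely recovered from $\mu$ in each, so no homomorphism is double counted; this gives the classification part of the Corollary.

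It remains to compute $r$, where $\qdim\Hom_{R_\al}(S^\mu,S^\la)=q^r$. A nonzero homomorphism $S^\mu\to S^\la$ sends $z^\mu$ to a multiple of $[\si^\la_\mu]$, so (just as in the proof of Corollary~\ref{cor:triv}, where $\la$ had a single tableau) its degree is $r=\deg(\T^\la_\mu)-\deg(\T^\mu)$. For $\T^\mu$ I would use additivity, $\deg(\T^\nu)=\sum_x\lfloor\nu_x/p\rfloor$. For $\T^\la_\mu$, the only leg node of $\mu$ is $(2,1)$, so $\T^\la_\mu=\si^\la_\mu\T^\la$ with $\si^\la_\mu$ the cycle $(\mu_1+1,\mu_1+2,\dots,d)$, whence $\deg(\T^\la_\mu)-\deg(\T^\la)=\deg(\psi_{\si^\la_\mu}e(\bi^\la))$, which from the grading of $R_\al$ equals $\sum_{x=\mu_1}^{d-2}(-a_{x,-1})$: a sum over the residues $\mu_1,\mu_1+1,\dots,d-2$ contributing $+1$ at each $x\equiv0$ or $x\equiv-2$ and $-2$ at each $x\equiv-1\pmod p$. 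Since that contribution vanishes over any complete block of $p$ consecutive residues, the sum reduces to a count over an interval whose length is controlled by $d\bmod p$; combining this with $\deg(\T^\la)=\lfloor(d-1)/p\rfloor$, and with the congruence $d\equiv1-(N+m)\pmod p$ in case (iii), this yields $r=1$ unless $p\mid d-1$ (where $r=0$) in items (1) and (2), and $r=N-\lceil(N+m)/p\rceil$ adjusted by $-1$, $+1$, or $0$ according as $p\mid d-1$, $p\mid d$, or neither in item (3). (As a fall-back, $\deg(\T^\la_\mu)$ can instead be obtained directly from the inductive formula~\eqref{EDegTab}.)

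The only real work is this last computation: tracking the floors and ceilings and performing the residue count so that the trichotomy $p\mid d-1$ / $p\mid d$ / neither emerges correctly and produces the $0/\pm1$ correction. Specialising the theorem, matching the parameter families, and verifying disjointness and uniqueness are routine, and the auxiliary facts (the tableau-degree formula, the identification of $\deg\psi_w$ with a difference of tableau degrees under the grading conventions of~\cite{KMR}, and the congruence for $d$) are standard.
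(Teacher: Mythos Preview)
Your proposal is correct and follows essentially the same route as the paper: specialise Theorem~\ref{thm:mainHom} to $k=1$, invoke Corollary~\ref{cor:pGc} to translate the $\Gc$ condition, and compute the degree as $\deg([\si_\mu])-\deg(\T^\mu)$ by first evaluating $\deg(\T^\la)$ and $\deg(\T^\mu)$ via the row formula and then the correction $\deg([\si_\mu])-\deg(\T^\la)$. Your explicit identification of that correction with $\sum_{x=\mu_1}^{d-2}(-a_{x,-1})$ is exactly the calculation the paper summarises in its table for $\deg([\si_\mu])$; one minor slip is that your disjointness argument uses ``second part $\not\equiv 0\pmod p$'', which does not separate item~(2) from item~(3) --- use instead that $\mu_2=a_2p-1\ge p-1\ge 2$ in item~(3) while $\mu_2=1$ in item~(2), or compare first parts.
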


\begin{proof}
Everything follows directly from Theorem~\ref{thm:mainHom} and Corollary~\ref{cor:pGc} except the degree calculations. The degree in cases (i) and (ii) is straightforward to calculate, and for case (iii) we can make the following calculations. 
\begin{align*}
\deg(\T^\mu) &= \sum_{x=1}^N a_x - N + 1 \\
\deg(\T^\la) &= \sum_{x=1}^N a_x - \left\lceil \frac{N+m}{p} \right\rceil \\
\deg([\si_\mu]) &= \deg(\T^\la) + \begin{cases} 0, &\text{if } p \mid d-1\\
        2, &\text{if } p \mid d\\
        1, &\text{otherwise.} \end{cases}
\end{align*}
\end{proof}


\begin{thebibliography}{ABC}

\bibitem{BK}
J. Brundan and A. Kleshchev,
Blocks of cyclotomic Hecke algebras and Khovanov-Lauda algebras, 
{\em Invent. Math.} {\bf 178} (2009), 451--484.

\bibitem{BKW}
J. Brundan, A. Kleshchev and W. Wang,
Graded Specht modules,
 {\em J. reine angew. Math.} {\bf 655} (2011), 61--87.

\bibitem{F}
M. Fayers, 
An extension of James' conjecture,
 {\em Int. Math. Res. Not. IMRN}, {\bf 10} (2007), 24 pp.

\bibitem{GL}
J.J. Graham and G.I. Lehrer, Cellular Algebras,
 {\em Invent. math.}, {\bf 123} (1996), 1--34.

\bibitem{HN}
D. Hemmer and D. Nakano, 
Specht Filtrations for Hecke Algebras of Type A,
 {\em J. London Math. Soc.}, {\bf 69} (2004), 623--638.

\bibitem{HM}
J. Hu and A. Mathas, Graded cellular bases for the cyclotomic Khovanov-Lauda-Rouquier algebras of
type A, {\em Adv. Math.}, {\bf 225} (2010), 598--642.

\bibitem{J}
G.D. James, {\em The representation theory of the symmetric groups},
 Lecture Notes in Mathematics 682, Springer, Berlin, 1978.

\bibitem{Kac}
V. Kac, {\em Infinite dimensional Lie algebras},
 Cambridge University Press, Cambridge, 1990.

\bibitem{KL1}
M. Khovanov and A. Lauda,
A diagrammatic approach to categorification of quantum groups I, 
{\em Represent. Theory} {\bf 13} (2009), 309--347.

\bibitem{KL2}
M. Khovanov and A. Lauda,
A diagrammatic approach to categorification of quantum groups II;
{\em Trans. Amer. Math. Soc.} {\bf 363} (2011), no. 5, 2685–-2700.
%{\tt  arXiv:0804.2080}.

\bibitem{K}
A. Kleshchev, Representation Theory of Symmetric Groups and Related Hecke Algebras,
 {\em Bull. Amer. Math. Soc.} {\bf 47} (2010), no. 3, 419--481.

\bibitem{KMR}
A. Kleshchev, A. Mathas, A. Ram, Universal Graded Specht Modules for Cyclotomic Hecke Algebras;
{\em Proc. Lond. Math. Soc. (3)} {\bf 105} (2012), no. 6, 1245-–1289.
%{\tt arXiv:1102.3519}.

\bibitem{MathasB}
A. Mathas, {\em Iwahori-Hecke algebras and Schur algebras of the symmetric group}, University Lecture Series 15,  American Mathematical Society, Providence, RI, 1999.

\bibitem{Murphy}
G. E. Murphy, On the representation theory of the symmetric groups and associated Hecke algebras,
 {\em J. Algebra} {\bf 152} (1992), 492--513.

\bibitem{R}
R. Rouquier, $2$-Kac-Moody algebras; {\tt arXiv:0812.5023}. 


\end{thebibliography}
\end{document}